\newtheorem{theo}{Theorem}
\newtheorem{lem}[theo]{Lemma}
\newtheorem{propo}[theo]{Proposition}
\theoremstyle{definition}
\newtheorem{defi}[theo]{Definition}
\theoremstyle{remark}
\newtheorem{rem}[theo]{Remark}
\newtheorem{ex}[theo]{Example}
\def\R{\mathbb{R}}
\def\Z{\mathbb{Z}}
\def\C{\mathbb{C}}
\def\N{\mathbb{N}}
\def\Q{\mathbb{Q}}
\def\n{\eta}
\def\r{\rho}
\def\a{\alpha}
\def\f{\varphi}
\def\b{\beta}
\def\n'{\nu}
\def\g{\gamma}
\begin{document}
\sloppy
\title{Real difference Galois theory.}
\author{Thomas Dreyfus}
\address{Universit\'e Claude Bernard Lyon 1, Institut Camille Jordan, 
43 boulevard du 11 novembre 1918,
69622 Villeurbanne, France.}
\email{dreyfus@math.univ-lyon1.fr}
\thanks{Work supported by the labex CIMI. This project has received funding from the European Research Council (ERC) under the European Union's Horizon 2020 research and innovation programme under the Grant Agreement No 648132.}

\subjclass[2010]{12D15,39A05}


\date{\today}

\begin{abstract}
In this paper, we develop a difference Galois theory in the setting of real fields. After proving the existence and uniqueness of the real Picard-Vessiot extension, we define the real difference Galois group and prove a Galois correspondence. 
\end{abstract} 
\maketitle

\tableofcontents

\section*{Introduction}

Let us consider an equation of the form:
\begin{equation}\label{eq5}
\phi Y=AY,
\end{equation} 
where $A$ is an invertible matrix having coefficients in a convenient field $\mathbf{k}$\footnote{In all the paper, all fields are of characteristic zero.} and $\phi$ is an automorphism of $\mathbf{k}$. A typical example is $\mathbf{k}:=\C(x)$ and $\phi y(x):=y(x+1)$. The aim of the difference Galois theory is to study (\ref{eq5}) from an algebraic point of view. See \cite{VdPS97} for details on this theory. See also \cite{BiaBi,Fr63,HS,Morik,MorUm}. The classical framework for difference Galois theory is to assume that $C$, the subfield of $\mathbf{k}$ of elements invariant under $\phi$, is algebraically closed. The goal of the present paper is to present a descent result. We explain what happens if we take instead a smaller field $\mathbf{k}$, such that $\mathbf{k}$ is a real field and $C$ is real closed, see $\S \ref{sec2}$ for the definitions. \\\par 
Assume that $C$ is algebraically closed and let us make a brief summary of the difference Galois theory. An important object attached to (\ref{eq5}) is the Picard-Vessiot extension. Roughly speaking, a Picard-Vessiot extension is a ring extension of $\mathbf{k}$ containing a basis of solutions of (\ref{eq5}). The Picard-Vessiot extension always exists, but the uniqueness is proved in \cite{VdPS97} only in the case where $C$ is algebraically closed. To the Picard-Vessiot extension, we attach a group, the difference Galois group, that measures the algebraic relations between solutions belonging to the Picard-Vessiot extension. This group may be seen as a linear algebraic subgroup of invertible matrices in coefficients in $C$. We also have a Galois correspondence. Note that several definitions of the difference Galois group have been made and the comparison between different Galois groups can be found in~\cite{CHS}. \pagebreak[3]\\\par 
From now, we drop the assumption that $C$ is algebraically closed, and we make the assumptions that $\mathbf{k}$ is a real field and $C$ is real closed. Our approach will follow \cite{CHS13,CHvdPa}, which prove similar results in the framework of differential Galois theory. Let us present a rough statement of our main result, Theorem \ref{theo1}. We prove that in this setting, a real Picard-Vessiot exists, i.e., a Picard-Vessiot extension that is additionally a real ring. Then, we also show a uniqueness result: given $R_{1}$ and $R_{2}$ two real Picard-Vessiot extensions, then $R_{1}$ and $R_{2}$ are isomorphic over $\mathbf{k}$ if and only if $R_{1}\otimes_{\mathbf{k}}R_{2}$ has no elements $x$ satisfying $x^{2}+1=0$. We define a real difference Galois group, which may be seen as a linear algebraic subgroup of invertible matrices in coefficients in the algebraic closure of $C$, and that is defined over $C$. See Proposition~\ref{propo2}. This allows us to prove a Galois correspondence, see Theorem~\ref{theo2}. See also \cite{CH13b,Dyc} for similar results in the framework of differential Galois theory.\\\par
 
The paper is presented as follows. In $\S \ref{sec1}$, we make some reminders of difference algebra. In $\S \ref{sec2}$, we state and prove our main result, Theorem \ref{theo1}, about the existence and uniqueness of real Picard-Vessiot extensions. In $\S \ref{sec3}$, we define the real difference Galois group, and prove a Galois correspondence.\\\par 

\textbf{Acknowledgments.} The author would like the thank the anonymous referee for permitting him to correct some mistakes that was originally made in the paper.
\pagebreak[3]

\section{Reminders of difference algebra}\label{sec1}
For more details on what follows, we refer to \cite{Cohn}. A difference ring $(R,\phi)$ is a ring $R$ together with a ring automorphism $\phi : R \rightarrow R$. An ideal of $R$ stabilized by $\phi$ is called a difference ideal of $(R,\phi)$. A simple difference ring $(R,\phi)$ is a difference ring with only difference ideals $(0)$ and $R$. If $R$ is a field then $(R,\phi)$ is called a difference field.\par 
Let $(R,\phi)$ be a difference ring and $m\in \N^{*}$. The difference ring~$R\{X_{1},\dots,X_{m}\}_{\phi}$ of difference polynomials in~$m$ indeterminacies over~$R$ is the usual polynomial ring in the infinite set of variables
$$\{ \phi^{\nu}(X_{j})\}^{\nu\in \Z}_{j\leq m},~$$
and with automorphism extending the one on $R$ defined by:
$$\phi\left(\phi^{\nu}(X_{j}) \right) =\phi^{\nu+1}(X_{j}).$$
The ring of constants $R^{\phi}$ of the difference ring $(R,\phi)$ is defined by 
$$R^{\phi}:=\{f \in R \ | \ \phi(f)=f\}.$$
If $R^{\phi}$ is a field, the ring of constants will be called field of constants. \\ \par 
 
A difference ring morphism from the difference ring $(R,\phi)$ to the difference ring $(\widetilde{R},\widetilde{\phi})$ is a ring morphism $\varphi : R \rightarrow \widetilde{R}$ such that 
$\varphi \circ \phi = \widetilde{\phi} \circ \varphi$.

A difference ring $(\widetilde{R},\widetilde{\phi})$ is a difference ring extension of a difference ring $(R,\phi)$ 
if $\widetilde{R}$ is a ring extension of $R$ and $\widetilde{\phi}_{\vert R}=\phi$;
in this case, we will often denote $\widetilde{\phi}$ by $\phi$. Two difference ring extensions $(\widetilde{R}_{1},\widetilde{\phi}_{1})$ and $(\widetilde{R}_{2},\widetilde{\phi}_{2})$ of a difference ring $(R,\phi)$ are isomorphic over $(R,\phi)$ if there exists a difference ring isomorphism $\varphi$ from $(\widetilde{R}_{1},\widetilde{\phi}_{1})$ to $(\widetilde{R}_{2},\widetilde{\phi}_{2})$ such that 
$\varphi_{\vert R}=\operatorname{Id}$.\\\par 
 Let $(R,\phi)$ be a difference ring such that $X^{2}+1\in R[X]$ is irreducible, i.e., there is no $x\in R$ such that $x^{2}+1=0$. We define, $R[i]$, to be the ring ${R[i]:=R[X]/(X^{2}+1)}$. We equip $R[i]$ with a structure of difference ring with $\phi (i)=i$. If $(R,\phi)$ is a difference ring with an element $x\in R$ satisfying $x^{2}+1=0$, we make the convention that $R[i]=R$.

\pagebreak[3]
\section{Existence and uniqueness of Picard-Vessiot extensions over real fields}\label{sec2}

Let $(\mathbf{k},\phi)$ be a difference field of characteristic zero. Consider a linear difference system 
\begin{equation}\label{eq2}
\phi Y=AY \hbox{ with } A \in \mathrm{GL}_{n}(\mathbf{k}),
\end{equation} 
where $\mathrm{GL}_{n}$ denotes the group of invertible $n\times n$ square matrices with entries in $\mathbf{k}$.

\pagebreak[3]
\begin{defi}
A Picard-Vessiot extension for (\ref{eq2}) over $(\mathbf{k},\phi)$ is a difference ring extension $(R,\phi)$ of $(\mathbf{k},\phi)$ such that 
\begin{itemize}
\item[(1)] there exists $U \in \mathrm{GL}_{n}(R)$ such that $\phi (U) = AU$ (such a $U$ is called a fundamental matrix of solutions of (\ref{eq2}));
\item[(2)] $R$ is generated, as a $\mathbf{k}$-algebra, by the entries of $U$ and $\det(U)^{-1}$;
\item[(3)] $(R,\phi)$ is a simple difference ring.\\
\end{itemize}
\end{defi}

We may always construct a Picard-Vessiot extension as follows. Take an indeterminate $n\times n$ square matrix $X:=X_{j,k}$ and consider $\mathbf{k}\{X,\det(X)^{-1}\}_{\phi}$ which is equipped with a structure of difference ring with $\phi X=AX$. Then, for any $I$, maximal difference ideal of $\mathbf{k}\{X,\det(X)^{-1}\}_{\phi}$, the ring $\mathbf{k}\{X,\det(X)^{-1}\}_{\phi}/I$ is a simple difference ring and therefore, is a Picard-Vessiot extension. \par 
According to \cite[$\S$1.1]{VdPS97}, when the field of constants $C:=\mathbf{k}^{\phi}$ is algebraically closed, we also have the uniqueness of the Picard-Vessiot extension, up to a difference ring isomorphism. Furthermore, in this case we have $C=R^{\phi}$ and, see \cite[Corollary~1.16]{VdPS97}, there exist an idempotent $e\in R$, and $t\in \N^{*}$, such that $\phi^{t}(e)=e$, $R=\displaystyle \bigoplus_{j=0}^{t-1}\phi^{j}(e)R$, and for all $0\leq j\leq t-1$, $\phi^{j}(e)R$ is an integral domain. \\\par 

In \cite{CHS}, it is defined the notion of weak Picard-Vessiot extension we will need in the next section.

\pagebreak[3]
\begin{defi}
A weak Picard-Vessiot extension for (\ref{eq2}) over $(\mathbf{k},\phi)$ is a difference ring extension $(R,\phi)$ of $(\mathbf{k},\phi)$ such that 
\begin{itemize}
\item[(1)] there exists $U \in \mathrm{GL}_{n}(R)$ such that $\phi (U) = AU$;
\item[(2)] $R$ is generated, as a $\mathbf{k}$-algebra, by the entries of $U$ and $\det(U)^{-1}$;
\item[(3)] $R^{\phi}=\mathbf{k}^{\phi}=C$.\\
\end{itemize}
\end{defi}

From what is above, we deduce that when the field of constants is algebraically closed, a Picard-Vessiot extension is a weak Picard-Vessiot extension. Note that the converse is not true as shows \cite[Example 1.25]{VdPS97}. 
\\\par 

We say that a field $\mathbf{k}$ is real when $0$ is not a sum of squares in $\mathbf{k}\setminus \{0\}$. We say that a field $\mathbf{k}$ is real closed when $\mathbf{k}$ does not admit an algebraic extension that is real. In particular, $\mathbf{k}$ is real closed if and only if $\mathbf{k}[i]$ is algebraically closed and satisfies $\mathbf{k}[i]\neq \mathbf{k}$.\par  

\pagebreak[3]
\begin{ex}
The field $\R((x))$ of formal Laurent series with real coefficients is real. The field $\Q(x)$ is real. The field of real numbers is real closed. 
\end{ex}

{\it From now we assume that $\mathbf{k}$ is a real field and its field of constants $C:=\mathbf{k}^{\phi}$ is real closed.} \\

Remind that we have seen that we have the existence of  $(R,\phi)$, Picard-Vessiot extension for (\ref{eq2}) over $(\mathbf{k},\phi)$.

\pagebreak[3]
\begin{lem}\label{lem2}
Let $(R,\phi)$, be a Picard-Vessiot extension for (\ref{eq2}) over $(\mathbf{k},\phi)$ and assume that $R\neq R[i]$. Then, $(R[i],\phi)$, is a Picard-Vessiot extension for (\ref{eq2}) over $(\mathbf{k}[i],\phi)$.  \end{lem}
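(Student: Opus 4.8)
The goal is to verify the three axioms of a Picard-Vessiot extension for $(R[i],\phi)$ over $(\mathbf{k}[i],\phi)$. First I would check that $(R[i],\phi)$ is genuinely a difference ring extension of $(\mathbf{k}[i],\phi)$: since $R\neq R[i]$ means $X^2+1$ is irreducible over $R$, a fortiori it is irreducible over $\mathbf{k}\subseteq R$, so $\mathbf{k}[i]=\mathbf{k}[X]/(X^2+1)$ embeds in $R[i]=R[X]/(X^2+1)$, and the $\phi$-action extending that on $R$ with $\phi(i)=i$ restricts correctly. Axioms (1) and (2) are then immediate: the same fundamental matrix $U\in\mathrm{GL}_n(R)\subseteq\mathrm{GL}_n(R[i])$ satisfies $\phi(U)=AU$, and $R[i]=R\oplus Ri$ is generated over $\mathbf{k}[i]$ by the entries of $U$ and $\det(U)^{-1}$ because it is already generated over $\mathbf{k}$ by those elements (indeed $R$ is, and adjoining $i$ to the base field suffices).

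The substantive point is axiom (3): that $(R[i],\phi)$ is a \emph{simple} difference ring, i.e.\ has no nontrivial $\phi$-stable ideals. The plan is to use the known simplicity of $(R,\phi)$ together with the fact that $R[i]$ is a free $R$-module of rank $2$ on which $\phi$ acts $R$-semilinearly and fixes $i$. Let $J$ be a nonzero difference ideal of $R[i]$. I would contract it to $R$: consider $J\cap R$, which is a difference ideal of $R$, hence $(0)$ or $R$. If $J\cap R=R$ then $1\in J$ and $J=R[i]$, so it suffices to rule out $J\cap R=(0)$. If $J\cap R=(0)$, take a nonzero element $a+bi\in J$ with $a,b\in R$; since $\phi(i)=i$, multiplying by $i$ gives $ai - b = i(a+bi)-\dots$ — more precisely $i(a+bi)=-b+ai\in J$, so both $a+bi$ and $-b+ai$ lie in $J$, whence $(a+bi)+i(-b+ai) = \dots$ combine to produce $2a, 2b \in \dots$; rescaling, $a\in J$ and $b\in J$, forcing $a,b\in J\cap R=(0)$, contradiction. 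Hence $J\cap R\neq (0)$, so $J\cap R=R$ and $J=R[i]$. This shows $(R[i],\phi)$ is simple. (Alternatively, one could argue via the idempotent decomposition $R=\bigoplus_{j=0}^{t-1}\phi^j(e)R$ recalled in the excerpt, but the direct contraction argument is cleaner and does not even need $R^\phi=C$.)

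The main obstacle, such as it is, is making the linear-algebra step in axiom (3) fully rigorous: one must be careful that $J$ is stable under multiplication by $i$ (which follows since $i\in R[i]$ and $J$ is an ideal) and that "$2a\in J\Rightarrow a\in J$" uses only that $2$ is a unit, which holds in characteristic zero. No subtlety involving the difference structure actually arises in the contraction, because $\phi(i)=i$ makes $R[i]=R\otimes_{\mathbf k}\mathbf k[i]$ as difference rings, so a $\phi$-ideal of $R[i]$ contracts to a $\phi$-ideal of $R$. I would also remark at the end that, combined with $R\neq R[i]$, this forces $\mathbf{k}[i]\neq\mathbf{k}$ as well, so the statement is consistent with the real-closed hypothesis on $C$ (equivalently, $\mathbf{k}^\phi$ being real closed, $\mathbf{k}[i]^\phi = C[i]$ is algebraically closed, which is exactly the classical hypothesis under which $(R[i],\phi)$ behaves as a standard Picard-Vessiot extension).
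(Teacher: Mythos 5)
Your setup, your verification of axioms (1) and (2), and your overall strategy for (3) --- contract a nonzero difference ideal $J$ of $R[i]$ to the difference ideal $J\cap R$ of $R$ and invoke the simplicity of $(R,\phi)$ --- all match the paper. But the one step that carries the whole proof, namely showing $J\cap R\neq(0)$, is wrong as written. From $a+bi\in J$ you deduce $i(a+bi)=-b+ai\in J$ and then claim to ``combine'' these to get $a,b\in J$. This cannot work: $-b+ai$ is an $R[i]$-multiple of $a+bi$, so every $R[i]$-linear combination of the two is again a multiple of $a+bi$; explicitly, $(a+bi)+i(-b+ai)=0$ and $(a+bi)-i(-b+ai)=2(a+bi)$, so you never leave the principal ideal $(a+bi)R[i]$. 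In general an element $a+bi$ of an ideal of $R[i]$ does \emph{not} force $a$ and $b$ into that ideal (in $\C[x]=\R[x][i]$ the ideal $(x+i)$ contains neither $x$ nor $1$), so your contradiction with $J\cap R=(0)$ is not established.

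The natural repair is the conjugate trick: $(a+bi)(a-bi)=a^{2}+b^{2}\in J\cap R$. But this only helps if $a^{2}+b^{2}\neq 0$, and the hypothesis $R\neq R[i]$ (no square root of $-1$ in $R$) does \emph{not} prevent a sum of two squares from vanishing in a ring that is not known to be real. This is exactly where the paper has to work harder and where the difference structure enters essentially: it considers the smallest difference ideal $J_{a}$ of $R$ containing $a$, observes that every $a_{1}\in J_{a}$ occurs as the real part of some element $a_{1}+ib_{1}\in J$ (using that $J$ is stable under $\phi$ and under multiplication by $R$), and applies the simplicity of $(R,\phi)$ to $J_{a}$. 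If $J_{a}=(0)$ then $a=0$ and $b=-i\cdot ib\in J\cap R\setminus\{0\}$; if $J_{a}=R$ one may take $a_{1}=1$, and then $(1+ib_{1})(1-ib_{1})=1+b_{1}^{2}\in J\cap R$ is nonzero precisely because $R\neq R[i]$. So the normalization to real part $1$ via difference-simplicity is the missing idea in your argument; without it, neither the ``extract $a$ and $b$'' step nor the unqualified conjugate trick goes through.
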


\begin{proof}
Let $(0)\neq I$ be a difference ideal of $(R[i],\phi)$. Note that $I\cap R$ is a difference ideal of $(R,\phi)$. We claim that $I\cap R\neq (0)$.  Let $a,b\in R$ with $0\neq a+ib\in I$. Then, $\phi(a)+i\phi(b)\in I$ and for all $c\in R$, $ac+ibc\in I$. Let $J$ be the smallest difference ideal of $R$ that contains $a$. From what is above, we may deduce that for all $a_{1}\in J$, there exists $b_{1}\in R$ such that $a_{1}+ib_{1}\in I$. Since $(R,\phi)$ is a simple difference ring, we have two possibilities: $J=(0)$ and $J=R$. We are going to treat separately the two cases. Assume that $J=(0)$. Then $a=0$ and $ib\in I$. But $ib\times(-i)=b\in I\cap R\setminus \{0\}$ which proves our claim when $J=(0)$. Assume that $J=R$. Then, there exists $b_{1}\in R$ such that $1+ib_{1}\in I$. But $(1+ib_{1})(1-ib_{1})=1+b_{1}^{2}\in I\cap R$. Since $R\neq R[i]$ we find that $1+b_{1}^{2}\neq 0$ which proves our claim when $J=R$.\par 
Since $I\cap R\neq (0)$ and $(R,\phi)$ is a simple difference ring, $I\cap R=R$. We now remark that $I$ is stable by multiplication by $\mathbf{k}[i]$, which shows that $I=R[i]$. This proves the lemma.
\end{proof}

\pagebreak[3]
\begin{propo}\label{propo3}
Let $(R,\phi)$, be a Picard-Vessiot extension for (\ref{eq2}) over $(\mathbf{k},\phi)$. Then, there exist an idempotent $e\in R$, and $t\in \N^{*}$, such that $\phi^{t}(e)=e$, $R=\displaystyle \bigoplus_{j=0}^{t-1}\phi^{j}(e)R$, and for all $0\leq j\leq t-1$, $\phi^{j}(e)R$ is an integral domain. \end{propo}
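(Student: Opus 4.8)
The plan is to reduce everything to the classical decomposition recalled in the excerpt, which is available only when the field of constants is algebraically closed. So the first step is to pass to the complex picture. If $R = R[i]$, i.e.\ $R$ already contains a square root of $-1$, then $C = \mathbf{k}^\phi$ is not changed by adjoining $i$ in the sense that we cannot exploit Lemma~\ref{lem2}, but in this case we argue directly: $R$ is a Picard-Vessiot extension over $(\mathbf{k}, \phi)$ whose field of constants we would like to know is algebraically closed. Actually the cleaner route is: if $R \neq R[i]$, apply Lemma~\ref{lem2} to get that $(R[i], \phi)$ is a Picard-Vessiot extension for (\ref{eq2}) over $(\mathbf{k}[i], \phi)$; since $C$ is real closed, $C[i]$ is algebraically closed, and $\mathbf{k}[i]^\phi \supseteq C[i]$ is then an algebraically closed field containing $C[i]$. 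One needs $\mathbf{k}[i]^\phi = C[i]$ (this should follow because $\mathbf{k}^\phi = C$ and $\phi(i) = i$, so any $a + ib$ fixed by $\phi$ has $a, b \in \mathbf{k}^\phi = C$). Hence the classical result (\cite[Corollary 1.16]{VdPS97}, as recalled just after the first Definition) applies to $(R[i], \phi)$ over $(\mathbf{k}[i], \phi)$: there is an idempotent $\tilde e \in R[i]$ and $\tilde t \in \N^*$ with $\phi^{\tilde t}(\tilde e) = \tilde e$, $R[i] = \bigoplus_{j=0}^{\tilde t - 1} \phi^j(\tilde e) R[i]$, and each $\phi^j(\tilde e) R[i]$ an integral domain.

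The second step is to descend this decomposition from $R[i]$ to $R$. The key observation is that the complex conjugation $\sigma : R[i] \to R[i]$, $a + ib \mapsto a - ib$, is a ring automorphism commuting with $\phi$ (since $\phi(i) = i$) and fixing $R$ pointwise. The set of minimal idempotents of $R[i]$ is canonically attached to the ring, hence permuted by $\sigma$, and also permuted cyclically by $\phi$ (the $\phi^j(\tilde e)$ form one $\phi$-orbit). I would consider the idempotent $e := \tilde e \cdot \sigma(\tilde e)$ or, more carefully, look at how $\sigma$ acts on the orbit $\{\phi^j(\tilde e)\}$. Two cases arise: either $\sigma$ fixes each $\phi^j(\tilde e)$, in which case each $\phi^j(\tilde e)$ lies in $R$ (being fixed by conjugation), the decomposition of $R[i]$ descends verbatim to $R = \bigoplus \phi^j(\tilde e) R$, and each summand $\phi^j(\tilde e) R$ is an integral domain because it embeds into the integral domain $\phi^j(\tilde e) R[i]$; or $\sigma$ acts nontrivially on the orbit. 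In the nontrivial case, $\sigma(\tilde e) = \phi^m(\tilde e)$ for some $m$, and pairing up the conjugate idempotents $\phi^j(\tilde e) + \phi^{j+m}(\tilde e)$ (these sums being $\sigma$-invariant, hence in $R$) gives a new system of idempotents in $R$; one checks these are orthogonal, $\phi$-cyclically permuted, sum to $1$, and that the corresponding direct summand of $R$, after tensoring with $\C$, becomes $\phi^j(\tilde e)R[i] \oplus \phi^{j+m}(\tilde e)R[i]$ — a product of two domains with conjugation swapping the factors, which forces the $R$-summand itself to be an integral domain (a real form of $D \times D$ with the swap is $D$ viewed as an $\R$-algebra, which is a domain). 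So in either case we obtain an idempotent $e \in R$, a period $t \mid \tilde t$, with $\phi^t(e) = e$, $R = \bigoplus_{j=0}^{t-1} \phi^j(e) R$, and each $\phi^j(e) R$ an integral domain.

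The remaining case is $R = R[i]$, i.e.\ $-1$ is already a square in $R$. Here I would argue that $R^\phi$ is then an algebraically closed field: $R$ is still a Picard-Vessiot extension over $\mathbf{k}$, $R^\phi$ is a field (one can show $R^\phi$ is a domain that is a field since $R$ is simple, by a standard argument: a nonzero constant generates the unit ideal as a difference ideal after inverting, or rather its annihilator argument), $R^\phi$ is algebraic over $C$ because in a Picard-Vessiot extension the constants are algebraic over the base constants — and more: $R^\phi$ contains $i$, so $R^\phi \supseteq C[i]$ which is already algebraically closed, forcing $R^\phi = C[i]$ up to the algebraic-over-$C$ point. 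Once $R^\phi$ is algebraically closed, \cite[Corollary 1.16]{VdPS97} applies directly to $(R, \phi)$ over $(\mathbf{k}, \phi)$ — one should check that the cited corollary only needs the constants of $R$ to be algebraically closed, which is the case — and we are done immediately. Alternatively, and perhaps more uniformly, one treats $R = R[i]$ as the degenerate instance of the descent above with $\sigma = \operatorname{Id}$.

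The main obstacle I anticipate is the combinatorial bookkeeping in the descent step: carefully tracking how the conjugation $\sigma$ interacts with the $\phi$-orbit of minimal idempotents of $R[i]$, verifying that the $\sigma$-invariant combinations one forms are genuinely idempotent, mutually orthogonal, and cyclically permuted by $\phi$ with the right period, and — most delicately — proving that in the "swap" case the real summand is actually an integral domain rather than merely a reduced ring. This last point is where the hypothesis that $C$ is \emph{real closed} (not merely real) is doing real work, via the structure of real forms of $D \times D$; I would isolate it as a short lemma about a difference domain $D$ over $\mathbf{k}[i]$ with an involution exchanging the two factors of $D \times D$ and descending to a domain over $\mathbf{k}$.
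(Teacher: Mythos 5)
Your treatment of the case $R\neq R[i]$ is essentially sound. After Lemma~\ref{lem2} and \cite[Corollary~1.16]{VdPS97} give $R[i]=\bigoplus_{j=0}^{\tilde t-1}\phi^{j}(\tilde e)R[i]$ with each summand a domain, the $\phi^{j}(\tilde e)$ are exactly the primitive idempotents of $R[i]$; the conjugation $\sigma$ is a difference ring automorphism of $R[i]$ fixing $R$ (because $\phi(i)=i$ by convention), so it permutes them and commutes with $\phi$, whence $\sigma(\tilde e)=\phi^{m}(\tilde e)$ with $2m\equiv 0 \pmod{\tilde t}$. Your two subcases ($m=0$, take $e=\tilde e\in R$ and $t=\tilde t$; $m=\tilde t/2$, take $e=\tilde e+\phi^{m}(\tilde e)\in R$ and $t=\tilde t/2$) then do yield the decomposition, and in the swap case $eR$ is the fixed ring of an involution of $D_{1}\times D_{2}$ exchanging the two domain factors, hence isomorphic to $D_{1}$; note that this last point needs nothing about $C$ being real closed, so the lemma you propose to isolate is easier than you fear. (The paper instead works with $e=(a+ib)(a-ib)$ and keeps the same $t$.)

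The genuine gap is the case $R=R[i]$. Your argument there rests on the claim that $i\in R^{\phi}$, but $R=R[i]$ only says that $R$ contains an element $i$ with $i^{2}=-1$; one then only knows $\phi(i)^{2}=-1$, and $\phi(i)=-i$ does occur. Concretely, take $\mathbf{k}=\R$ with $\phi=\mathrm{id}$ (so $C=\R$ is real closed) and the equation $\phi Y=-Y$: the quotient of $\R[X,X^{-1}]$, $\phi(X)=-X$, by the maximal difference ideal $(X^{2}+1)$ is $R=\C$ with $\phi$ acting as complex conjugation. This is a Picard--Vessiot extension with $R=R[i]$ but $R^{\phi}=\R$, which is not algebraically closed; so \cite[Corollary~1.16]{VdPS97} cannot be applied to $(R,\phi)$, nor over $(\mathbf{k}[i],\phi)$, whose $\phi$-constants are then also only $C$. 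Your fallback of treating $R=R[i]$ as "the degenerate instance with $\sigma=\mathrm{Id}$" fails for the same reason: the input to the descent, namely a decomposition coming from a base with algebraically closed constants, is exactly what is missing. The paper's device here is to pass to $\phi^{2}$: since $\phi^{2}(i)=i$, the ring $(R,\phi^{2})$ is a Picard--Vessiot ring for $\phi^{2}Y=\phi(A)AY$ over $(\mathbf{k}[i],\phi^{2})$, whose constants are $C[i]$ (using that $\mathbf{k}^{\phi^{2}}=C$, because a $\phi^{2}$-constant is of degree at most $2$ over $C$ and $\mathbf{k}$ is real); one applies \cite[Corollary~1.16]{VdPS97} to get a $\phi^{2}$-decomposition and then needs a further, not at all immediate, argument to convert it into a $\phi$-decomposition. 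Without this step or an equivalent, your proof does not cover the case $R=R[i]$.
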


\begin{proof}
Let us treat separately two cases. Assume that $R\neq R[i]$. Due to Lemma \ref{lem2}, $(R[i],\phi)$, is a Picard-Vessiot extension for (\ref{eq2}) over $(\mathbf{k}[i],\phi)$. We remind that by definition, if $R\neq R[i]$, we extend $\phi$ to $R[i]$ by $\phi (i)=i$.  
Then, the field of constants of $R[i]$ is $C[i]$, which is algebraically closed.  From \cite[Corollary~1.16]{VdPS97}, we obtain that there exist $a,b\in R$, with  $a+ib$ is idempotent, $t\in \N^{*}$, such that  ${\phi^{t}(a+ib)=a+ib}$,
\begin{equation}\label{eq3}
R[i]=\displaystyle \bigoplus_{j=0}^{t-1}\phi^{j}(a+ib)R[i],
\end{equation}
and for all $0\leq j\leq t-1$, $\phi^{j}(a+ib)R[i]$ is an integral domain. Let $e:=a^{2}+b^{2}\in R$. A straightforward computation shows that $a-ib$ is idempotent. Since $e=(a+ib)(a-ib)$ is the product of two idempotent elements it is also idempotent. Using  $\phi^{t}(a-ib)=a-ib$, we find  $\phi^{t}(e)=e$.\par 
 Let us prove that for all $0\leq j\leq t-1$, $\phi^{j}(a-ib)R[i]$ is an integral domain. Let $0\leq j\leq t-1$, $c+id\in R[i]$ with $c,d\in R$, such that $\phi^{j}(a-ib)(c+id)=0$. It follows that  $\phi^{j}(a+ib)(c-id)=0$ and therefore, $c-id=0=c+id$ since for all $0\leq j\leq t-1$, $\phi^{j}(a+ib)R[i]$ is an integral domain. We have proved that for all $0\leq j\leq t-1$, $\phi^{j}(a-ib)R[i]$ is an integral domain. Let us prove that for all $0\leq j\leq t-1$, $\phi^{j}(e)R[i]$ is an integral domain. Let $0\leq j\leq t-1$, $c\in R[i]$, such that $c\phi^{j}(e)=c\phi^{j}(a+ib)\phi^{j}(a-ib)=0$. We use successively the fact that $\phi^{j}(a+ib)R[i]$ and $\phi^{j}(a-ib)R[i]$ are integral domains to deduce that $c=0$, which shows that $\phi^{j}(e)R[i]$ is an integral domain. Therefore, for all $0\leq j\leq t-1$, $\phi^{j}(e)R$ is an integral domain. \par 
We claim that $\{\phi^{j}(e),0\leq j\leq t-1\}$ are linearly independent over $R[i]$. Let us consider ${c_{0},\dots,c_{t-1}\in R[i]}$ such that $\displaystyle\sum_{j=0}^{t-1}c_{j}\phi^{j}(e)=0$.
We have $\displaystyle\sum_{j=0}^{t-1}c_{j}\phi^{j}(a-ib)\phi^{j}(a+ib)=0$. We use (\ref{eq3}) to deduce that for all $0\leq j\leq t-1$, $c_{j}\phi^{j}(a-ib)=0$. We remind that for all $0\leq j\leq t-1$, $\phi^{j}(a-ib)R[i]$ is an integral domain. This shows that for all $0\leq j\leq t-1$, $c_{j}=0$. This proves our claim.\par  
 Using (\ref{eq3}), to prove the proposition, it is now sufficient to prove the equality

\begin{equation}\label{eq4}
\displaystyle \bigoplus_{j=0}^{t-1}\phi^{j}(a+ib)R[i]=\displaystyle \bigoplus_{j=0}^{t-1}\phi^{j}(e)R[i].
\end{equation}

The inclusion $\displaystyle \bigoplus_{j=0}^{t-1}\phi^{j}(e)R[i]\subset \displaystyle \bigoplus_{j=0}^{t-1}\phi^{j}(a+ib)R[i]$ is a direct consequence of the fact that $e=(a-ib)(a+ib)\in (a+ib)R[i]$. Let us prove the other inclusion. Let ${\a\in \displaystyle \bigoplus_{j=0}^{t-1}\phi^{j}(a+ib)R[i]}$, and define $f:=\displaystyle\prod_{j=0}^{t-1}\phi^{j}(e)$ which is invariant under $\phi$. Therefore, $fR[i]$ is a difference ideal of $R[i]$. We use $e=(a+ib)(a-ib)$ and the fact that $(a+ib)R[i]$ is an integral domain to obtain that $f\neq 0$ and $fR[i]\neq (0)$. Since $R[i]$ is a simple difference ring, the difference ideal $fR[i]$ equals to $R[i]$. This means that the ideal of $R[i]$ generated by the $\phi^{j}(f)$, $j\in \Z$, is $R[i]$. Since $\phi(f)=f$, there exists $\b\in R[i]$ such that $f\b=\a$. We again use~(\ref{eq3}) to find that ${\a=f\displaystyle\sum_{j=0}^{t-1}c_{j}\phi^{j}(a+ib)}$ for some $c_{j}\in R[i]$. Since $e=(a-ib)(a+ib)$, we may define for all $0\leq j\leq t-1$, $d_{j}:=f/\phi^{j}(a-ib)\in R[i]$. A straightforward computation shows that $\a=\displaystyle\sum_{j=0}^{t-1}c_{j}d_{j}\phi^{j}(e)$, which implies $\a\in \displaystyle \bigoplus_{j=0}^{t-1}\phi^{j}(e)R[i]$. We have proved $ \displaystyle \bigoplus_{j=0}^{t-1}\phi^{j}(a+ib)R[i]\subset\displaystyle \bigoplus_{j=0}^{t-1}\phi^{j}(e)R[i]$. If we combine with the other inclusion, we obtain (\ref{eq4}). This completes the proof in the case $R\neq R[i]$.\par 
Assume that $R=R[i]$. Since $i^{2}=-1$, we have $\phi(i)=\pm i$ and then $\phi^{2}(i)=i$. Hence, $(R,\phi^{2})$ is a ring extension of $(\mathbf{k}[i],\phi^{2})$, whose field of constants is $C[i]$, which is algebraically closed. Furthermore, by construction, it is also a Picard-Vessiot extension for ${\phi^{2}Y=\phi (A)AY}$ over $(\mathbf{k}[i],\phi^{2})$.  From \cite[Corollary~1.16]{VdPS97}, we obtain that there exist an idempotent $e\in R$, $t\in \N^{*}$, such that  ${\phi^{2t}(e)=e}$,
$$
R=\displaystyle \bigoplus_{j=0}^{t-1}\phi^{2j}(e)R,
$$
and for all $0\leq j\leq t-1$, $\phi^{2j}(e)R$ is an integral domain. If $t=1$, $R=eR$ is an integral domain, and we may take $e=1$ to have the desired decomposition of $R$. Assume that $t>1$. Using the fact that $\phi$ is an automorphism we find that for all $j\in \Z$, $\phi^{j}(e)R$ is an integral domain and $\phi^{j}(e)$ is idempotent. 
 Let $t'\in \N^{*}$ maximal such that $eR,\dots,\phi^{t'-1}(e)R$ are in direct sum. This implies that there exists $  r\in R$ with $r\phi^{t'}(e)\neq 0$, such that $r\phi^{t'}(e)\in \displaystyle \bigoplus_{j=0}^{t'-1}\phi^{j}(e)R$. We claim that $r\phi^{t'}(e)\in eR$. If $t'=1$ the claim is clear. Assume that  $t'>1$. Then, for all $0<j<t'$, we have $e\phi^{j}(e)=0$ and, since $\phi$ is an automorphism,  $\phi^{t'}(e)\phi^{j}(e)=0$. It follows that $\displaystyle \bigoplus_{j=0}^{t'-1}\phi^{j}(e)R\phi^{t'}(e)\subset eR$ and therefore, $r\phi^{t'}(e)=r(\phi^{t'}(e))^{2}\in eR$, which proves the claim in the $t'>1$ case. In particular, there exists $r'\in R$ such that $r\phi^{t'}(e)=r' e\neq 0$. 
 We use the fact that $\phi^{t'}$ is an automorphism, to find ${\phi^{t'}(r)\phi^{2t'}(e)=\phi^{t'}(r')\phi^{t'}(e)\neq 0}$. Since $\phi^{t'}(e)R$ is an integral domain and $\phi^{t'}(e)$ is idempotent, we have
$\phi^{t'}(r')r\phi^{t'}(e)=\phi^{t'}(r')\phi^{t'}(e)r\phi^{t'}(e)\neq 0$. But the latter inequality implies $\phi^{t'}(r)\phi^{2t'}(e)r'e\neq 0$. This shows that $\phi^{2t'}(e)e\neq 0$.
 Since $R=\displaystyle \bigoplus_{j=0}^{t-1}\phi^{2j}(e)R$, $\phi^{2t}(e)=e$, and 
 $\displaystyle \bigoplus_{j=0}^{t'-1}\phi^{j}(e)R$, we find  $t=t'$. With $R=\displaystyle \bigoplus_{j=0}^{t-1}\phi^{2j}(e)R$, we obtain $\phi^{t}(e)\in\displaystyle \bigoplus_{j=0}^{t-1}\phi^{2j}(e)R$.
  We remind that for all $0<j<t$, we have $e\phi^{j}(e)=0$.
Using the fact that $\phi$ is an automorphism, we obtain that for all $0<j <t$, we have $\phi^{t}(e)\phi^{2j}(e)=0$. It follows that $\displaystyle \bigoplus_{j=0}^{t-1}\phi^{2j}(e)R\phi^{t}(e)\subset eR$ and therefore, $\phi^{t}(e)=(\phi^{t}(e))^{2}\in eR$. So there exists $r'\in R$ such that $\phi^{t}(e)= er'$. But an integral domain may have only one non zero idempotent element. Since $e,\phi^{t}(e)$ are non zero idempotent and $eR$ is an integral domain we find that $e=\phi^{t}(e)$.
 In particular, $\displaystyle \bigoplus_{j=0}^{t-1}\phi^{j}(e)R$ is a difference ideal of the simple difference ring $(R,\phi)$. Since  $e\neq 0$, we find that the difference ideal is not $(0)$, proving that $R=\displaystyle \bigoplus_{j=0}^{t-1}\phi^{j}(e)R$. This completes the proof in the case $R=R[i]$.
\end{proof}

Let $R$ be a difference ring that is the direct sum of integral domains $R:=\displaystyle \bigoplus_{j=0}^{t-1}R_{j}$. We define $K$, the total ring of fractions of $R$, by $K:=\displaystyle \bigoplus_{j=0}^{t-1}K_{j}$, where for all $0\leq j\leq t-1$, $K_{j}$ is the fraction field of~$R_{j}$. 

We say that $R$ is a real ring if for all $0\leq j\leq t-1$, $K_{j}$ is a real field. Note that by \cite[Theorem 2.8]{Lam},  this is equivalent to the usual definition of a real ring, that is that $0$ is not a sum of squares in $R\setminus \{0\}$, see \cite[Definition 2.1]{Lam}. \\ \par 
The notion of Picard-Vessiot extension is not well suited in the real case. Following \cite{CHvdPa}, let us define:

\pagebreak[3]
\begin{defi}
A real Picard-Vessiot extension for (\ref{eq2}) over $(\mathbf{k},\phi)$ is a difference ring extension $(R,\phi)$ of $(\mathbf{k},\phi)$ such that 
\begin{itemize}
\item[(1)] $(R,\phi)$ is a Picard-Vessiot extension for (\ref{eq2}) over $(\mathbf{k},\phi)$;
\item[(2)] $(R,\phi)$ is a real difference ring.\\
\end{itemize}
\end{defi}

Let us remind that if $(R,\phi)$ is a difference ring such that $X^{2}+1\in R[X]$ is irreducible, then $R[i]$ is the ring ${R[i]:=R[X]/(X^{2}+1)}$. If $(R,\phi)$ is a difference ring with $x\in R$ satisfying $x^{2}+1=0$, we make the convention that $R[i]=R$. \par 
We are now able to state our main result:

\pagebreak[3]
\begin{theo}\label{theo1} Let us consider the equation (\ref{eq2}) which has coefficients in $(\mathbf{k},\phi)$.
\begin{itemize}
\item [(1)] There exists a real Picard-Vessiot extension for (\ref{eq2}) over $(\mathbf{k},\phi)$. 
\item [(2)] Let $(R,\phi)$ be a real Picard-Vessiot extension for (\ref{eq2}) over $(\mathbf{k},\phi)$. Then, $(R,\phi)$ is a weak Picard-Vessiot extension for (\ref{eq2}) over $(\mathbf{k},\phi)$, i.e., the ring of constants of $R$ is $C$.
\item [(3)] Let $(R_{1},\phi_{1})$ and $(R_{2},\phi_{2})$ be two real Picard-Vessiot extensions for (\ref{eq2}) over $(\mathbf{k},\phi)$.  Let us equip the ring $R_{1}\otimes_{\mathbf{k}} R_{2}$ with a structure of difference ring as follows: $\phi (r_{1}\otimes_{\mathbf{k}} r_{2})=\phi_{1} (r_{1})\otimes_{\mathbf{k}} \phi_{2} (r_{2})$ for $r_{j}\in R_{j}$. Then, $(R_{1},\phi_{1})$ is isomorphic to $(R_{2},\phi_{2})$ over $(\mathbf{k},\phi)$ if and only if $R_{1}\otimes_{\mathbf{k}} R_{2}\neq R_{1}\otimes_{\mathbf{k}} R_{2}[i]$.
\end{itemize}
\end{theo}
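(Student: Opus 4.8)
The plan is to treat the three assertions in turn, systematically reducing everything to the classical difference Galois theory over the algebraically closed field $C[i]$ (via Lemma~\ref{lem2}) and to real algebraic geometry over the real closed field $C$. For part~(1), I would apply Zorn's lemma to $S:=\mathbf{k}\{X,\det(X)^{-1}\}_{\phi}$ equipped with $\phi X = AX$. Since $\mathbf{k}$ is real, $S$ is a real ring: it is a domain whose fraction field is a purely transcendental extension of $\mathbf{k}$. The set of difference ideals $J$ of $S$ with $S/J$ real is nonempty (it contains $(0)$) and closed under unions of chains, so it has a maximal element $I$; set $R:=S/I$. By construction $R$ fulfils conditions (1) and (2) of the definition of a Picard--Vessiot extension and is a real difference ring, so the whole point is to prove that $R$ is a \emph{simple} difference ring. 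Here one uses that $R$, being real, is reduced and has enough real primes: for a prime $\mathfrak{p}$ of $R$ with $R/\mathfrak{p}$ real, its difference core $\mathfrak{p}^{\circ}:=\bigcap_{n\in\Z}\phi^{n}(\mathfrak{p})$ is a difference ideal whose quotient embeds into a product of copies of the real ring $R/\mathfrak{p}$, hence is real, hence equals $(0)$ by maximality of $I$. Consequently a nonzero difference ideal $\mathfrak{a}$ of $R$ cannot be contained in a real prime, so $R/\mathfrak{a}$ is not semireal, i.e.\ $1+\sum\rho_{j}^{2}\in\mathfrak{a}$ for some $\rho_{j}\in R$; one then rules this out by a maximality/structure argument, passing to a difference-simple quotient of $R$ and invoking Proposition~\ref{propo3}. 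Proving that $R$ is difference-simple is the main obstacle of part~(1).

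For part~(2), note first that $R$, being real, contains no $x$ with $x^{2}+1=0$, so $R\neq R[i]$ and Lemma~\ref{lem2} applies: $(R[i],\phi)$ is a Picard--Vessiot extension for (\ref{eq2}) over $(\mathbf{k}[i],\phi)$, whose field of constants $C[i]$ is algebraically closed. Hence $R[i]^{\phi}=C[i]$ by the classical theory. Since $R$ is difference-simple, $R^{\phi}$ is a field (any nonzero $\phi$-invariant element generates the unit ideal), and $C=\mathbf{k}^{\phi}\subseteq R^{\phi}\subseteq R[i]^{\phi}=C[i]$. Thus $R^{\phi}$ is a field algebraic over $C$, so it equals $C$ or $C[i]$; but $R^{\phi}\subseteq R$ is a real ring while $C[i]$ is not, so $R^{\phi}=C$.

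For part~(3), the implication "$R_{1}\cong R_{2}$ over $\mathbf{k}$ $\Rightarrow$ $R_{1}\otimes_{\mathbf{k}} R_{2}$ has no square root of $-1$" is easy: composing a difference isomorphism $R_{1}\otimes_{\mathbf{k}} R_{2}\cong R_{1}\otimes_{\mathbf{k}} R_{1}$ with the multiplication morphism $R_{1}\otimes_{\mathbf{k}} R_{1}\to R_{1}$ (a morphism of difference rings over $\mathbf{k}$) would send such a square root to an element of $R_{1}$ squaring to $-1$, contradicting that $R_{1}$ is real. For the converse, assume $R_{1}\otimes_{\mathbf{k}} R_{2}$ has no square root of $-1$. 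Picking fundamental matrices $U_{1}\in\mathrm{GL}_{n}(R_{1})$ and $U_{2}\in\mathrm{GL}_{n}(R_{2})$, the matrix $Z:=(U_{1}\otimes 1)^{-1}(1\otimes U_{2})$ is $\phi$-invariant, so $Z\in\mathrm{GL}_{n}(D)$ with $D:=(R_{1}\otimes_{\mathbf{k}} R_{2})^{\phi}$; as in the classical theory this gives a difference isomorphism $R_{1}\otimes_{\mathbf{k}} R_{2}\cong R_{1}\otimes_{C}D$ (with $\phi$ acting trivially on the factor $D$), and $D$ is a finitely generated $C$-algebra (using that $R_{1}$ is faithfully flat over $C$ and that $R_{1}^{\phi}=C$ by part~(2)). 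Any $C$-algebra morphism $D\to C$ induces a difference morphism $R_{1}\otimes_{\mathbf{k}} R_{2}\to R_{1}$ over $\mathbf{k}$, which restricts to an injection $R_{2}\hookrightarrow R_{1}$ that is onto (two fundamental matrices in $R_{1}$ differ by a matrix in $\mathrm{GL}_{n}(C)\subseteq\mathrm{GL}_{n}(\mathbf{k})$), hence an isomorphism. So it suffices to produce a $C$-point of $D$.

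Since $C$ is real closed and $D$ is finitely generated, the Artin--Lang homomorphism theorem gives: $D$ has a $C$-point if and only if $D$ is semireal. It remains to deduce that $D$ is semireal from the hypothesis that $R_{1}\otimes_{C}D=R_{1}\otimes_{\mathbf{k}} R_{2}$ has no square root of $-1$, and this is the crux. I would argue by contraposition: if $-1$ were a sum of squares in $D$, one shows that this forces a square root of $-1$ in $R_{1}\otimes_{C}D$. This is where the real-closedness of $C=\mathbf{k}^{\phi}$ is essential: after base change to $\mathbf{k}[i]$, Lemma~\ref{lem2} and the uniqueness of Picard--Vessiot extensions over $\mathbf{k}[i]$ provide the classical trivialization of $R_{1}[i]\otimes_{\mathbf{k}[i]}R_{2}[i]$, exhibiting $\mathrm{Spec}(D)$ as a $C$-form of the (classical, hence $C[i]$-defined) difference Galois group of $R_{1}[i]/\mathbf{k}[i]$; the only finite extensions of $C$ being $C$ and $C[i]$, a failure of semireality of $D$ is detected by $C[i]$-valued constants, and carrying this information back through the trivialization together with the conjugation of $\mathbf{k}[i]/\mathbf{k}$ produces an element of $R_{1}\otimes_{\mathbf{k}} R_{2}$ squaring to $-1$. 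Once $D$ is known to be semireal, Artin--Lang yields the $C$-point of $D$ and hence the isomorphism $R_{1}\cong R_{2}$ over $\mathbf{k}$, which completes the proof.
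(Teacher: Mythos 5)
Your part (2) is correct and is essentially the paper's argument. Parts (1) and (3), however, each leave the genuinely hard step unproved, and in both cases the paper takes a different route precisely to avoid the obstacle you run into. For (1), your Zorn's lemma construction produces a difference ideal $I$ of $S=\mathbf{k}\{X,\det(X)^{-1}\}_{\phi}$ maximal among those with real quotient, but you never establish that $R=S/I$ is difference-simple; your sketch (``one then rules this out by a maximality/structure argument, passing to a difference-simple quotient of $R$ and invoking Proposition~\ref{propo3}'') is not an argument. Maximality of $I$ only tells you that every proper nonzero difference quotient of $R$ fails to be real; it does not preclude such quotients from existing, and passing to a difference-simple quotient is circular, since there is no a priori reason that quotient is real --- that is exactly the existence statement being proved. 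The paper sidesteps this entirely: it starts from an arbitrary (complex) Picard--Vessiot extension, uses the Seidenberg-type embedding of a finitely generated difference subfield into $(\C^{\Z},\phi_{s})$ (Lemma~\ref{lem1}) to realize the solutions as sequences, replaces the fundamental matrix $U$ by $V=UB$ with $B\in \mathrm{GL}_{n}(\Q[i])$ chosen so that $V$ becomes real-valued, and then \emph{transports} simplicity from the already-simple ring $R[i]$ to the real subring generated by the entries of $V$. Some substitute for this construction is needed; the purely ideal-theoretic route does not close on its own.

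For (3), the forward implication is fine, but in the converse your entire argument funnels into the claim that non-semireality of $D:=(R_{1}\otimes_{\mathbf{k}}R_{2})^{\phi}$ would force a square root of $-1$ in $R_{1}\otimes_{\mathbf{k}}R_{2}$, and you explicitly leave this ``crux'' as a gesture (``carrying this information back through the trivialization \dots produces an element squaring to $-1$''). In addition, the intermediate steps you take for granted --- that $R_{1}\otimes_{\mathbf{k}}R_{2}\cong R_{1}\otimes_{C}D$ and that $D$ is a finitely generated $C$-algebra --- are themselves the standard torsor statements proved in the literature only over algebraically closed constants, so they would need justification here. The paper's proof is shorter and avoids Artin--Lang altogether: since $R_{j}[i]$ are Picard--Vessiot extensions over $\mathbf{k}[i]$ whose constants $C[i]$ are algebraically closed, classical uniqueness gives an isomorphism $R_{1}[i]\cong R_{2}[i]$ over $\mathbf{k}[i]$; restricting it to $\varphi:R_{1}\to R_{2}[i]$ and using the hypothesis that $R_{1}\otimes_{\mathbf{k}}R_{2}$ has no square root of $-1$, one checks that the $R_{2}$-module generated by $\varphi(R_{1})$ inside $R_{2}[i]$ cannot contain $i$ and hence equals $R_{2}$, so $\varphi$ already lands in $R_{2}$. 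If you want to keep your Artin--Lang strategy you must actually prove the semireality transfer; as written, the proof of (3) is incomplete.
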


Before proving the theorem, we are going to state and prove a lemma which is inspired by a lemma of \cite{Sei58}.\\\par 

\pagebreak[3]
\begin{lem}\label{lem1}
Consider a difference field $(\mathbf{K},\phi)$ of characteristic zero that is finitely generated over $\Q$ by the elements $u_{1},\dots,u_{m}$ and let $(\mathbf{K}_{R},\phi)$ be a real difference subfield of $(\mathbf{K},\phi)$. Then, there exists  $h : \mathbf{K}\rightarrow \C$, injective morphism of fields that induces an injective morphism from $\mathbf{K}_{R}$ to $\R$. \par 
For every $1\leq j\leq m$, $k\in \Z$, let us write $c_{j,k}:=h(\phi^{k} (u_{j}))\in \C$. Then, the assignment $u_{j}\mapsto \widetilde{u}_{j}:=(c_{j,k})_{k\in \Z}$ defines (resp. induces) an injective morphism of difference fields 
 between $(\mathbf{K},\phi)$ (resp. $(\mathbf{K}_{R},\phi)$) and $(\C^{\Z},\phi_{s})$  (resp. $(\R^{\Z},\phi_{s})$), where $\phi_{s}$ denotes the shift. 
\end{lem}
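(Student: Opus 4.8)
The plan is to construct the field embedding $h$ first; the statement about the $\widetilde u_{j}$ then follows almost formally. Indeed, suppose $h:\mathbf K\to\C$ is an injective field morphism with $h(\mathbf K_{R})\subseteq\R$, and define $\iota:\mathbf K\to\C^{\Z}$ by $\iota(a)=(h(\phi^{k}(a)))_{k\in\Z}$. Since the ring operations on $\C^{\Z}$ are coordinatewise and each $h\circ\phi^{k}$ is a ring morphism, $\iota$ is a ring morphism; it is injective because its kernel is a proper ideal of the field $\mathbf K$; it satisfies $\iota\circ\phi=\phi_{s}\circ\iota$ because $\iota(\phi(a))(k)=h(\phi^{k+1}(a))=\iota(a)(k+1)$; and since $\mathbf K_{R}$ is $\phi$-stable with $h(\mathbf K_{R})\subseteq\R$, the morphism $\iota$ maps $\mathbf K_{R}$ into $\R^{\Z}$. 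As $\iota(u_{j})=(c_{j,k})_{k\in\Z}=\widetilde u_{j}$ and $\mathbf K$ is generated over $\Q$ by $u_{1},\dots,u_{m}$, the morphism $\iota$ is the unique ring morphism realizing the assignment $u_{j}\mapsto\widetilde u_{j}$; in particular that assignment is well defined. So everything reduces to the existence of $h$.

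For the existence of $h$, I would first embed $\mathbf K_{R}$ into $\R$ and then extend the embedding to $\mathbf K\to\C$. Note that $\mathbf K_{R}$, being a subfield of a finitely generated extension of $\Q$, is finitely generated over $\Q$, and, being a real field, is orderable by Artin--Schreier. Granting an embedding $h_{0}:\mathbf K_{R}\hookrightarrow\R$, write $\mathbf K=\mathbf K_{R}(v_{1},\dots,v_{e})(\theta')$ with $v_{1},\dots,v_{e}$ a transcendence basis of $\mathbf K/\mathbf K_{R}$ and $\theta'$ a primitive element of the remaining finite algebraic extension. Since $h_{0}(\mathbf K_{R})$ is countable and $\C$ has transcendence degree $2^{\aleph_{0}}$ over $\Q$, pick $w_{1},\dots,w_{e}\in\C$ algebraically independent over $h_{0}(\mathbf K_{R})$, send $v_{i}\mapsto w_{i}$ to extend $h_{0}$ to $\mathbf K_{R}(v_{1},\dots,v_{e})$, and extend once more across the finite algebraic extension using that $\C$ is algebraically closed. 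The resulting $h$ is injective (a ring morphism of fields) and restricts to $h_{0}$, hence $h(\mathbf K_{R})\subseteq\R$.

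The heart of the proof is thus the embedding $h_{0}:\mathbf K_{R}\hookrightarrow\R$. By the primitive element theorem, write $\mathbf K_{R}=\Q(t_{1},\dots,t_{d})(\theta)$ with $t_{1},\dots,t_{d}$ a transcendence basis and $\theta$ a root of its monic minimal polynomial $g(t_{1},\dots,t_{d},T)$, of degree $N$ over $\Q(t_{1},\dots,t_{d})$. Fix an ordering on $\mathbf K_{R}$ and let $\mathcal{R}$ be a real closure, a model of the theory of real closed fields. The set $S$ of tuples $\bar x$ for which $g(\bar x,\cdot)$ has a root is semialgebraic and defined over $\Q$ (after clearing denominators, a first-order formula over $\Q$); it contains $(t_{1},\dots,t_{d})\in\mathcal{R}^{d}$, and since that tuple is algebraically independent over $\Q$ the Zariski closure of $S$ is all of affine $d$-space, so $S$ has nonempty interior in $\mathcal{R}^{d}$. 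By Tarski's transfer principle for real closed fields, the realization of $S$ in $\R^{d}$ also has nonempty interior; and any nonempty Euclidean open subset of $\R^{d}$ contains a tuple $(s_{1},\dots,s_{d})$ algebraically independent over $\Q$, since the algebraically dependent tuples form a countable union of zero sets of nonzero polynomials with rational coefficients, hence a meagre subset of the Baire space $\R^{d}$. Choosing a real root $\sigma$ of $g(s_{1},\dots,s_{d},T)$, and noting that the isomorphism $\Q(t_{1},\dots,t_{d})\cong\Q(s_{1},\dots,s_{d})$ (valid as $(s_{i})$ is algebraically independent) carries $g(t_{1},\dots,t_{d},T)$ to the still irreducible $g(s_{1},\dots,s_{d},T)$, the assignment $t_{i}\mapsto s_{i}$, $\theta\mapsto\sigma$ extends to the desired embedding $h_{0}:\mathbf K_{R}\hookrightarrow\Q(s_{1},\dots,s_{d})(\sigma)\subseteq\R$.

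The step I expect to be the main obstacle is exactly this construction of $h_{0}$: it is the only place where the hypothesis that $\mathbf K_{R}$ is real is used essentially (through Artin--Schreier and the passage to a real closure), and it requires combining model-theoretic transfer between $\mathcal{R}$ and $\R$ with a Baire-category argument in order to realize, inside $\R$, a point of full transcendence degree over $\Q$. This is a form of the Artin--Lang homomorphism theorem; if one prefers, one may simply invoke the known fact that every finitely generated real field embeds into $\R$ and take $h_{0}$ from there. Everything else in the argument is formal.
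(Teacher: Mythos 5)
Your argument is correct, but it takes a genuinely different route from the paper at the one step that matters. Both proofs reduce the lemma to producing the field embedding $h$: your map $\iota(a)=(h(\phi^{k}(a)))_{k\in\Z}$ is exactly the paper's assignment $u_{j}\mapsto\widetilde u_{j}$, verified on all of $\mathbf{K}$ rather than through polynomial relations among generators. For $h$ itself the paper proceeds in the opposite order: it first sends a transcendence basis of $\mathbf{K}|\Q$ to algebraically independent real numbers (a counting argument, $\R$ being uncountable) and extends algebraically into $\C$, and only afterwards argues that the resulting $h$ maps $\mathbf{K}_{R}$ into $\R$, on the grounds that $h(\mathbf{K}_{R})$ is a real algebraic extension of $h(\Q(t_{j}))\subset\R$ and hence lies in ``the'' real closure of $h(\Q(t_{j}))$, which is contained in $\R$. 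You instead embed $\mathbf{K}_{R}$ into $\R$ first, by an Artin--Lang-type argument (a $\Q$-semialgebraic set of parameters with nonempty interior, Tarski transfer from a real closure of $\mathbf{K}_{R}$ to $\R$, and Baire category to locate an algebraically independent tuple), and then extend to $\mathbf{K}\rightarrow\C$ by adjoining fresh transcendentals and using that $\C$ is algebraically closed. Your ordering of the construction is the more robust one: an abstractly real algebraic extension of a subfield of $\R$ inside $\C$ need not itself lie in $\R$ (for instance $\Q(2^{1/3}e^{2i\pi/3})$ is a real field not contained in $\R$), so the paper's a posteriori argument requires care about which copy of the real closure of $h(\Q(t_{j}))$ inside $\C$ actually contains $h(\mathbf{K}_{R})$; building the real embedding first sidesteps this issue entirely, at the cost of invoking the nontrivial (but standard) fact that every finitely generated formally real field embeds into $\R$. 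The auxiliary facts you rely on --- finite generation of the intermediate field $\mathbf{K}_{R}$ over $\Q$, orderability of real fields, the dimension-implies-interior and transfer arguments for the set $S$ --- are all standard and correctly deployed.
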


\begin{proof}[Proof of Lemma \ref{lem1}]
Let us prove that there exists $h : \mathbf{K}\rightarrow \C$, injective morphism of fields. Let $t_{j}$ be a transcendental basis of $\mathbf{K}|\Q$. Since $\mathbf{K}|\Q$ is generated as a field by a countable number of elements, the number of elements in  the transcendental basis is countable. Using the fact that $\R$ is not countable, we find that there exists $h : \Q(t_{j})\rightarrow \R$, injective morphism of fields. Let us extend $h$ to $\mathbf{K}$. 
The elements of $\mathbf{K}|\Q(t_{j})$, satisfy a list of algebraic equations, which have a solution in an extension of $\C$. Since $\C$ is algebraically closed, we find that the equations have a solution in  $\C$. In other words, we have the existence of an embedding of $\mathbf{K}$ into $\C$. \par 
 Let us prove that 
$\mathbb{K}_{R}:=h(\mathbf{K}_{R})\subset \R$. Let $t_{j}$ be a transcendental basis of $\mathbf{K}_{R}|\Q$. We have $h(\Q(t_{j})) \subset \mathbb{K}_{R}\subset \C$. Since $h$ is an injective morphism of fields and $\mathbf{K}_{R}$ is a real field, we find that $\mathbb{K}_{R}$ is a real field. Then, we obtain that the real closure of $h(\Q(t_{j}))$ contains  $\mathbb{K}_{R}$. Since by construction $h(\Q(t_{j}))\subset \R$ we find that the real closure of $h(\Q(t_{j}))$ is contained in $\R$. Then, we conclude that $ \mathbb{K}_{R}\subset  \R \subset \C$.\par 

Let $P\in \Q\{X_{1},\dots,X_{m}\}_{\phi}$. We have the following equality $P(\widetilde{u}_{1},\dots,\widetilde{u}_{m})=P((c_{1,k})_{k\in \Z},\dots,(c_{m,k})_{k\in \Z}).$ Therefore,  $P(u_{1},\dots,u_{m})=0$ if and only if $P(\widetilde{u}_{1},\dots,\widetilde{u}_{m})=0$. This shows that  the assignment $u_{j}\mapsto \widetilde{u}_{j}:=(c_{j,k})_{k\in \Z}$ defines (resp. induces) an injective morphism of difference fields 
 between $(\mathbf{K},\phi)$ (resp. $(\mathbf{K}_{R},\phi)$) and $(\C^{\Z},\phi_{s})$ (resp. $(\R^{\Z},\phi_{s})$).
\end{proof}

\begin{proof}[Proof of Theorem \ref{theo1}]
\begin{trivlist}
\item \; (1)
Let us prove the existence of a real Picard-Vessiot extension. We have seen how to construct $(R,\phi)$, Picard-Vessiot extension for (\ref{eq2}) over $(\mathbf{k},\phi)$.  Let ${U\in \mathrm{GL}_{n}(R)}$ be a fundamental solution.  
As we can see in Proposition \ref{propo3}, $R$ is a direct sum of integral domains and we may define $K$, the total ring of fractions of $R$. The ring $K$ is a direct sum of fields $K:=\displaystyle \bigoplus_{j=0}^{t-1}K_{j}$ satisfying $\phi(K_{j})=K_{j+1}$, $K_{t}:=K_{0}$. Therefore, for all $0\leq j\leq t-1$, $(K_{j},\phi^{t})$ is a difference field. Let $(\mathcal{K},\phi)$ be the difference subring of $(K,\phi)$ generated over $\Q$ by the components on the $K_{j}$, $0\leq j\leq t-1$, of the entries of $U$, $\det (U)^{-1}$, and the elements in $\mathbf{k}$ involved in the algebraic difference relations between the entries of $U$ and $\det (U)^{-1}$. In particular,  the entries of the matrix $A$ of (\ref{eq2}) belong to $\mathcal{K}$. As we can see from Lemma~\ref{lem1}, for all $0\leq j\leq t-1$, there exists $\widetilde{h}_{j}$, an embedding of $(\mathcal{K}\cap K_{j},\phi^{t})$ into $(\C^{\Z},\phi_{s})$. If $t>1$, without loss of generality, we may assume that for all $0\leq j\leq t-2$, (resp. for $j=t-1$), for all $u\in \mathcal{K}\cap K_{j}$, $\widetilde{h}_{j}(u)=\widetilde{h}_{j+1}(\phi (u))$ (resp. $\phi_{s}\left(\widetilde{h}_{0}(\phi (u))\right)=\widetilde{h}_{t-1}(u)$).
We may define $\widetilde{h}$, an embedding of the difference ring $(\mathcal{K},\phi)$ into the difference ring $(\C^{\Z},\phi_{s})$ as follows. Let $k=\sum_{j=0}^{t-1}k_{j}$ with $k\in\mathcal{K}$, $k_{j}\in  K_{j}$ and let us define $\widetilde{h}(k)\in \C^{\Z}$ as the sequence which term number $c+dt$, with $0\leq c\leq t-1$, $d\in \Z$,  equals to the term number $d$ of $\widetilde{h}_{c}(k_{c})$.  Furthermore, since $\mathbf{k}$ is a real field, we find, see Lemma \ref{lem1}, that for all $k\in (\mathcal{K}\cap \mathbf{k},\phi)$, $\widetilde{h}(k)\in \R^{\Z}$.\par 
 Let $C_{r,1},\dots,C_{r,n}$, (resp. $C_{i,1},\dots,C_{i,n}$) be the real parts (resp. the imaginary parts) of the columns of the term number zero of $\widetilde{h}(U)$. We remind that $U$ is invertible. Therefore, the term number zero of $\widetilde{h}(U)$ is invertible. Then, we may extract $n$ columns ${C_{1},\dots,C_{n}\in \{C_{r,1},\dots,C_{r,n},C_{i,1},\dots,C_{i,n}\}}$, that are linearly independent.
 Therefore, there exists $B$ a matrix with entries in $\Q[i]$, such that the term number zero of $\widetilde{h}(U)B$ has columns $C_{1},\dots,C_{n}$, and is consequently real and invertible. Then, the term number zero of $\widetilde{h}(U)B$ is invertible. Since the term number zero of $\widetilde{h}(U)$ is also invertible, we find that $B\in \mathrm{GL}_{n}(\Q[i])$.
 Let $V:=UB$ be a fundamental solution, which belongs to $\mathrm{GL}_{n}(\mathcal{K}[i])$.  The map $\widetilde{h}$ extends to a morphism of difference rings between $(\mathcal{K}[i],\phi)$ and $(\C^{\Z},\phi_{s})$. Consequently, we have $\widetilde{V}:=\widetilde{h}(V)=\widetilde{h}(U)\widetilde{h}(B)$. \par 
Let $(\widetilde{\Q},\phi_{s})$ be the difference subring of $(\Q^{\Z},\phi_{s})$ of constant sequences. Note that $(\widetilde{\Q},\phi_{s})$ is a difference field. Let $(\widetilde{\mathbf{k}},\phi_{s})$ be the difference subring of $(\R^{\Z},\phi_{s})$ generated over $(\widetilde{\Q},\phi_{s})$, by the elements $\widetilde{h}(k)$, $k\in (\mathcal{K} \cap\mathbf{k},\phi)$. Note that $(\widetilde{\mathbf{k}},\phi_{s})$ is a difference field. We remind that since $\mathbf{k}$ is a real field, Lemma \ref{lem1} tells us that $\widetilde{h}(A)\in (\mathrm{GL}_{n}(\R))^{\Z}$. Since the term number zero of $\widetilde{V}$ belongs to $ \mathrm{GL}_{n}(\R)$, and $\phi_{s} (\widetilde{V})=\widetilde{h}(A)\widetilde{V}$, we obtain that $\widetilde{V}\in (\mathrm{GL}_{n}(\R))^{\Z}$.
Let $(\widetilde{R},\phi_{s})$ be the difference subring of $(\R^{\Z},\phi_{s})$ generated over $(\widetilde{\mathbf{k}},\phi_{s})$ by the entries of $\widetilde{V}$, and $\det(\widetilde{V})^{-1}$.\par 

 We claim that $(\widetilde{R},\phi_{s})$ is a simple difference ring. 
 To the contrary, assume that there exists  $I$, a difference ideal of $\widetilde{R}$  different from $(0)$ and $\widetilde{R}$. It follows that $I(\widetilde{R}[i])$ is different from $(0)$ and $\widetilde{R}[i]$. We have a natural embedding from $(\widetilde{R},\phi_{s})$ into $ (R[i],\phi)$.  Then, $I(\widetilde{R}[i])$ induces a difference ideal of $(R[i],\phi)$, which is different from $(0)$ and $R[i]$. Let us treat separately two cases.  If $R=R[i]$, then we use the fact that the Picard-Vessiot extension $(R[i],\phi)$ is a simple difference ring to conclude that we have a contradiction and $(\widetilde{R},\phi_{s})$ is a simple difference ring. If $R\neq R[i]$, we use Lemma \ref{lem2}, to deduce that $(R[i],\phi)$ is a Picard-Vessiot extension for $(\ref{eq2})$ over $(\mathbf{k}[i],\phi)$ and therefore, is a simple difference ring.  We find a contradiction and we have proved  our claim, that is that $(\widetilde{R},\phi_{s})$ is a simple difference ring. We additionally use the fact that by construction $\widetilde{R}$ is a real ring to prove that $(\widetilde{R},\phi_{s})$  is a real Picard-Vessiot extension for $\phi \widetilde{Y}=\widetilde{h}(A)\widetilde{Y}$, over $(\widetilde{\mathbf{k}},\phi_{s})$. \par 
 Let $R_{1}$, be the difference ring generated over  $\mathcal{K} \cap\mathbf{k}$ by the entries of the fundamental solution $V$ and $\det(V)^{-1}$.
  Using the fact that $(\widetilde{\mathbf{k}},\phi_{s})$ is isomorphic to $( \mathcal{K} \cap\mathbf{k},\phi)$, and $(\widetilde{R},\phi_{s})$  is a real Picard-Vessiot extension for $\phi \widetilde{Y}=\widetilde{h}(A)\widetilde{Y}$, over $(\widetilde{\mathbf{k}},\phi_{s})$, we obtain that $R_{1}$ is a real Picard-Vessiot extension for (\ref{eq2}) over $(\mathcal{K} \cap\mathbf{k},\phi)$. \par 
 Let us prove that $R_{2}:=R_{1}\otimes_{\mathcal{K} \cap\mathbf{k}}\mathbf{k}$ is a real Picard-Vessiot extension for (\ref{eq2}) over $(\mathbf{k},\phi)$. By construction, $R_{2}$ is generated over $\mathbf{k}$ by the entries of $V$ and $\det(V)^{-1}$. It is sufficient to prove that $R_{2}$ is a simple difference ring which is real. \par 
 We claim that $R_{2}$ is a real ring.
   Let $a_{j}\in R_{2}$ such that $\sum_{j} (a_{j})^{2}=0$. Let us write $a_{j}=\sum_{\ell}r_{j,\ell}\otimes_{\mathcal{K} \cap\mathbf{k}} k_{j,\ell}$, with $r_{j,\ell}\in R_{1}$, $k_{j,\ell}\in \mathbf{k}$. With $\sum_{j} (a_{j})^{2}=0$, we obtain an algebraic relation over $\Q$ between the $r_{j,\ell}$ and the $k_{j,\ell}$. Note that $R_{1}\subset R_{2}\subset K[i]$. Then, as a consequence of the definition of $\mathcal{K}$, we find that $\mathcal{K}[i]$ is the difference subring of $(K[i],\phi)$ generated over $\Q$ by  $i$, the components on the $K_{j}[i]$, $0\leq j\leq t-1$, of the entries of $V$, $\det (V)^{-1}$, and the elements in $\mathbf{k}$ involved in the algebraic difference relations between the entries of $V$ and $\det (V)^{-1}$. Furthermore, since $i$ in an algebraic number that does not belong to the real field $\mathbf{k}$, and $V=UB$, with 
 $U\in \mathrm{GL}_{n}(\mathcal{K})$, $B\in \mathrm{GL}_{n}(\Q[i])$, we find that 
   the elements in $\mathbf{k}$ involved in the latter relations  are in fact involved in algebraic difference relations between the entries of $U$ and $\det (U)^{-1}$, proving that they belong to $\mathcal{K}$.
 Hence, we find that for all $j,\ell$, $r_{j,\ell}\in R_{1}\cap \mathcal{K}[i]$ and $k_{j,\ell}\in \mathbf{k}\cap \mathcal{K}$. 
Therefore, for all $j$, $a_{j}\in R_{1}$.
    Since $R_{1}$ is a real ring, we find that for all $j$, $a_{j}=0$, proving that $R_{2}$ is a real ring.\par 
     It is now sufficient to prove that $(R_{2},\phi)$ is a simple difference ring. Let $I\neq (0)$ be a difference ideal of $(R_{2},\phi)$.  Since $V=UB$, with $V\in \mathrm{GL}_{n}(R_{2})$, $U\in \mathrm{GL}_{n}(R)$, and $B\in \mathrm{GL}_{n}(\Q[i])$, we find that $R_{2}[i]=R[i]$. The difference ideal $I$ induces the difference ideal $I[i]$ of $(R[i],\phi)$. Let us treat separately two cases. If $R[i]=R$, then $(R[i],\phi)$ is a simple difference ring since it is a Picard-Vessiot extension for (\ref{eq2}) over $(\mathbf{k},\phi)$, proving that $I[i]=R[i]$ and $I=R_{2}$. Therefore,  $(R_{2},\phi)$ is a simple difference ring and a real Picard-Vessiot extension for (\ref{eq2}) over $(\mathbf{k},\phi)$. Assume that $R[i]\neq R$.
 With Lemma~\ref{lem2}, $(R[i],\phi)$ is a Picard-Vessiot extension for (\ref{eq2}) over $(\mathbf{k}[i],\phi)$. Then, $(R[i],\phi)$ is a simple difference ring and $I[i]=R_{2}[i]$, proving that $I=R_{2}$. This shows that  $(R_{2},\phi)$ is a simple difference ring and a real Picard-Vessiot extension for (\ref{eq2}) over $(\mathbf{k},\phi)$. \\ \par 

\item (2)  With Lemma \ref{lem2} we find that $(R[i],\phi)$ is  a Picard-Vessiot extension for (\ref{eq2}) over $(\mathbf{k}[i],\phi)$. 
Remind that by assumption, $C[i]$ is algebraically closed. As we can deduce from \cite[Lemma~1.8]{VdPS97}, $R[i]^{\phi}=C[i]$. It follows that $R^{\phi}\subset C[i]$. By assumption, $R$ is a real ring. This implies that $i\notin R$. Therefore, $C=\mathbf{k}^{\phi}\subset R^{\phi}$.  Hence, the field of constants of $R$ is $C$. \\\par 
 \item (3)
Let us assume that, $R_{1}\otimes_{\mathbf{k}} R_{2}\neq R_{1}\otimes_{\mathbf{k}} R_{2}[i]$ and let us prove that $(R_{1},\phi_{1})$ is isomorphic to $(R_{2},\phi_{2})$ over $(\mathbf{k},\phi)$. We remind, see Lemma \ref{lem2}, that for $j\in \{1,2\}$, $(R_{j}[i],\phi_{j})$, is a Picard-Vessiot extension for (\ref{eq2}) over $(\mathbf{k}[i],\phi)$. We also remind that the field of constants of $\mathbf{k}[i]$ is $C[i]$. Due to \cite[Proposition 1.9]{VdPS97}, we find that $(R_{1}[i],\phi_{1})$ is isomorphic to $(R_{2}[i],\phi_{2})$ over $(\mathbf{k}[i],\phi)$. Let $\varphi : R_{1}\rightarrow R_{2}[i]$ be the restriction of the morphism. Then, we may define a morphism of difference rings 
$$\begin{array}{llll}
\Psi :& R_{1}\otimes_{\mathbf{k}} R_{2}&\rightarrow & R_{2}[i]\\
&x\otimes y&\mapsto &\varphi  (x)y.
\end{array} $$
The morphism $\Psi$ is a $R_{2}$-linear map, and the image of $R_{1}\otimes_{\mathbf{k}} R_{2}$ under $\Psi$ is a $R_{2}$-submodule of $R_{2}[i]$, called $V$.
\par 
  The assumption $R_{1}\otimes_{\mathbf{k}} R_{2}\neq R_{1}\otimes_{\mathbf{k}} R_{2}[i]$ implies that there are no $f\in R_{1}\otimes_{\mathbf{k}} R_{2}$ such that $f^{2}+1=0$. Since $\Psi$ is a morphism of difference ring, there are no $g\in V$ such that $g^{2}+1=0$, which proves $i\notin V$. Combining this fact  with the inclusion $R_{2}\subset V$, we obtain that $V=R_{2}$ (we remind that $V$ is a $R_{2}$-submodule of $R_{2}[i]$). In other words, the image of $R_{1}$ under $\varphi$ is included in $R_{2}$. This implies that  $(R_{1},\phi_{1})$ is isomorphic to $(R_{2},\phi_{2})$ over $(\mathbf{k},\phi)$.\par 
Conversely, if $(R_{1},\phi_{1})$ is isomorphic to $(R_{2},\phi_{2})$ over $(\mathbf{k},\phi)$, then there exists a morphism of difference rings $\varphi : R_{1}\rightarrow R_{2}$. As above, let us define $\Psi$, morphism of difference rings between $R_{1}\otimes_{\mathbf{k}} R_{2}$ and $R_{2}$ defined by $\Psi (x\otimes y)=\varphi(x)y$. Since $R_{2}$ is a real ring, we find that $R_{2}\neq R_{2}[i]$. Since $\Psi$ is a morphism of difference rings, we obtain that $R_{1}\otimes_{\mathbf{k}} R_{2}\neq R_{1}\otimes_{\mathbf{k}} R_{2}[i]$.
\end{trivlist}
\end{proof}

The following example, who is inspired by \cite{CHS13}, illustrates a situation where two Picard-Vessiot extensions are not isomorphic. 

\begin{ex}\label{ex1}
Let $\phi := f(z)\mapsto f(2z)$ and consider  ${\phi Y=\sqrt{2}Y}$ which has coefficients in $\R(x)$. Let us consider the following fundamental solutions $(\sqrt{x})$ and $(i\sqrt{x})$. Consider the corresponding difference ring extensions  ${R_{1}|\R(x):=\R\left[\sqrt{x},\sqrt{x}^{-1}\right]|\R(x)}$ and ${R_{2}|\R(x):=\R\left[i\sqrt{x},(i\sqrt{x})^{-1}\right]}$. Let us prove that $(R_{1},\phi)$ is a simple difference ring. The proof for $(R_{2},\phi)$ is similar. Let $I\neq (0)$ be a difference ideal of $R_{1}$ and let $P\in \R[X]$ with minimal degree such that $P(\sqrt{x})\in I$. Let $k\in \N$ be the degree of $P$. Assume that $k\neq 0$. We have $\phi (P(\sqrt{x}))=P(\sqrt{2}\sqrt{x})\in I$, which shows that $\phi (P(\sqrt{x}))-\sqrt{2}^{k}P(\sqrt{x})=Q(\sqrt{x})\in I$ where $Q\in \R[X]$ has degree less than $k$. This is in contradiction with the minimality of $k$, and  shows that $k=0$. This implies that $I=R_{1}$, which proves that $(R_{1},\phi)$ is a simple difference ring.
Since $R_{1}$ and $R_{2}$ are real rings, $R_{1}|\R(x)$ and $R_{2}|\R(x)$ are two real Picard-Vessiot extensions for $\phi Y=\sqrt{2}Y$ over $(\R(x),\phi)$. 
Note that there are no difference ring isomorphism between $(R_{1},\phi)$ and $(R_{2},\phi)$ over $\R(x)$ because $X^{2}=x$ has a solution in $R_{1}$ and no solutions in $R_{2}$. This is not in contradiction with Theorem \ref{theo1}  since $R_{1}\otimes_{\R(x)}R_{2}= R_{1}\otimes_{\R(x)}R_{2}[i]$, because $$\left(\sqrt{x}\otimes_{\R(x)}\frac{1}{i\sqrt{x}}\right)^{2}=-1.$$ 
\end{ex}
\pagebreak[3]
\section{Real difference Galois group}\label{sec3}
In this section, we still consider (\ref{eq2}). Let $(R,\phi)$ be a real Picard-Vessiot extension for (\ref{eq2}) over $(\mathbf{k},\phi)$ with fundamental solution $U\in \mathrm{GL}_{n}(R)$. Consider the difference ring $(R[i],\phi)$, which is different from $(R,\phi)$, since $R$ is a real ring. Inspiriting from \cite{CHS13}, let us define the real difference Galois group as follows:

\pagebreak[3]
\begin{defi}
We define  $G_{R[i]}$,  as the group of difference ring automorphism of $R[i]$ letting $\mathbf{k}[i]$ invariant. We define $G$, the real difference Galois group of (\ref{eq2}), as the group $\{ \f_{|R}, \f \in G_{R[i]}\}$.
\end{defi}
Note that elements of $G$ are maps from $R$ to $R[i]$.
Due to Theorem \ref{theo1}, (2), we have an injective group morphism
$$\begin{array}{cccc}
\r_{U}: & G & \longrightarrow & \mathrm{GL}_{n}(C[i]) \\ 
 & \f &\longmapsto  & U^{-1}\f(U),
\end{array}$$
which depends on the choice of the fundamental solution $U$ in $R$. Another choice of a fundamental solution in $R$ will gives a representation that is conjugated to the first one.  \\ \par 

Remind, see Proposition \ref{propo3}, that there exist an idempotent $e\in R$, and $t\in \N^{*}$, such that $\phi^{t}(e)=e$, $R=\displaystyle \bigoplus_{j=0}^{t-1}\phi^{j}(e)R$, and for all $0\leq j\leq t-1$, $\phi^{j}(e)R$ is an integral domain. Due to Lemma \ref{lem2}, $(R[i],\phi)$ is a Picard-Vessiot extension for (\ref{eq2}) over $(\mathbf{k}[i],\phi)$. Furthermore, $R[i]=\displaystyle \bigoplus_{j=0}^{t-1}\phi^{j}(e)R[i]$ and the total ring of fractions of $R[i]$ equals $K[i]$, where $K$ is the total ring of fractions of $R$. Then, we call $G_{K[i]}$, the classical difference Galois group of (\ref{eq2}), the group of difference ring automorphism of $K[i]$ letting $\mathbf{k}[i]$ invariant. See \cite{VdPS97} for more details. The difference Galois group of (\ref{eq2}) may also be seen as a subgroup of $\mathrm{GL}_{n}(C[i])$. Furthermore, its image in $\mathrm{GL}_{n}(C[i])$ is a linear algebraic subgroup of $\mathrm{GL}_{n}(C[i])$. We have the following result in the real case.

\pagebreak[3]
\begin{propo}\label{propo2}
Let $(R,\phi)$ be a real Picard-Vessiot extension for (\ref{eq2}) over $(\mathbf{k},\phi)$ with fundamental solution $U\in \mathrm{GL}_{n}(R)$. Let $G$, be the real difference Galois group of (\ref{eq2}) and $G_{K[i]}$, be the difference Galois group of (\ref{eq2}). We have the following equality
$$\hbox{Im } \r_{U}=\left\{ U^{-1}\f(U), \f \in G\right\}=\left\{ U^{-1}\f(U), \f \in G_{K[i]}\right\}.$$ 
Furthermore, $\hbox{Im } \r_{U}$ is a linear algebraic subgroup of~$\mathrm{GL}_{n} (C[i])$ defined over $C$. We will identify~$G$ with a linear algebraic subgroup of~$\mathrm{GL}_{n} (C[i])$ defined over $C$ for a chosen fundamental solution.
\end{propo}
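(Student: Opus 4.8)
The plan is to derive everything from the analogous statement for the classical difference Galois group $G_{K[i]}$ acting on the Picard-Vessiot extension $(R[i],\phi)$ of $(\mathbf k[i],\phi)$, which is available because, by Lemma~\ref{lem2}, $(R[i],\phi)$ is a genuine Picard-Vessiot extension over $(\mathbf k[i],\phi)$ and its field of constants $C[i]$ is algebraically closed, so the full theory of \cite{VdPS97} applies. From that theory, $\r_U$ identifies $G_{K[i]}$ with a linear algebraic subgroup of $\mathrm{GL}_n(C[i])$; the whole point is to show that this subgroup is actually defined over $C$ and that it is also realised by the smaller group $G$ of automorphisms defined only on $R[i]$ (and restricted to $R$).

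First I would prove $\{U^{-1}\f(U),\ \f\in G\}=\{U^{-1}\f(U),\ \f\in G_{K[i]}\}$. The inclusion $\subseteq$ is immediate: any $\f\in G_{R[i]}$ extends uniquely to an automorphism of the total ring of fractions $K[i]$ fixing $\mathbf k[i]$, hence lies in $G_{K[i]}$ after this extension, and the matrix $U^{-1}\f(U)$ is unchanged. For $\supseteq$, given $\sigma\in G_{K[i]}$ I must check that $\sigma$ maps $R[i]$ into itself; this is the standard fact that a difference automorphism of $K[i]$ fixing $\mathbf k[i]$ sends the fundamental matrix $U$ to $U\cdot(U^{-1}\sigma(U))$ with $U^{-1}\sigma(U)\in\mathrm{GL}_n(C[i])$, so it permutes the $\mathbf k[i]$-algebra generated by the entries of $U$ and $\det(U)^{-1}$, i.e. $R[i]$; its restriction to $R[i]$ then lies in $G_{R[i]}$, and restricting further to $R$ gives an element of $G$ with the same matrix. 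Thus $\mathrm{Im}\,\r_U$ equals the image of the classical Galois group, and in particular it is already known to be a linear algebraic subgroup of $\mathrm{GL}_n(C[i])$.

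The remaining, and main, point is that $\mathrm{Im}\,\r_U$ is defined over $C$. Here I would use that $R$ is a \emph{real} subring of $R[i]$ with $R[i]=R\oplus iR$, so complex conjugation $\kappa$ on $R[i]$ (fixing $R$ pointwise, sending $i\mapsto -i$) is a well-defined ring automorphism commuting with $\phi$ and stabilising $\mathbf k[i]$ with $\mathbf k[i]^\kappa=\mathbf k$. Then $\f\mapsto \kappa\circ\f\circ\kappa$ is an involution of $G_{R[i]}$, and on the matrix side it corresponds to entrywise conjugation $g\mapsto \bar g$ on $\mathrm{Im}\,\r_U\subseteq\mathrm{GL}_n(C[i])$ (using that $U$ has entries in the real ring $R$, so $\kappa(U)=U$, hence $U^{-1}\kappa\f\kappa(U)=\overline{U^{-1}\f(U)}$, the bar being conjugation of $C[i]$ over $C$). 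Therefore $\mathrm{Im}\,\r_U$ is a Zariski-closed subgroup of $\mathrm{GL}_n(C[i])$ stable under the action of $\mathrm{Gal}(C[i]/C)=\{\mathrm{id},\ \bar{\ }\}$; by Galois descent for varieties over the real closed field $C$ (equivalently: a closed subset of affine space over $C[i]$ defined by an ideal stable under conjugation is generated by polynomials with coefficients in $C$), it is defined over $C$. The step I expect to be the genuine obstacle is making this descent precise: one must check that the \emph{ideal} of $\mathrm{Im}\,\r_U$ in $C[i][X_{j,k},\det(X)^{-1}]$, not merely the point set, is stable under conjugation — which follows since the point set is conjugation-stable and $C[i]$ is algebraically closed, so the ideal is radical and determined by its zero set — and then invoke that an ideal of $C[X]\otimes_C C[i]$ stable under $1\otimes\bar{\ }$ descends to an ideal of $C[X]$, which is exactly $\mathrm{Gal}(C[i]/C)$-descent for the quadratic extension $C\subseteq C[i]$. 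The final sentence, identifying $G$ with this $C$-subgroup for a fixed choice of $U$ and noting that another choice conjugates it, is then immediate from the injectivity of $\r_U$ (Theorem~\ref{theo1}(2)) and the change-of-fundamental-solution formula already recorded before the proposition.
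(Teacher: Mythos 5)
Your proposal is correct, and the first half (the equality $\{U^{-1}\f(U),\f\in G\}=\{U^{-1}\f(U),\f\in G_{K[i]}\}$ via restriction/extension through $G_{R[i]}$, using that $U^{-1}\sigma(U)$ is a constant matrix so that automorphisms of $K[i]$ stabilize $R[i]$) is essentially the paper's argument. Where you genuinely diverge is the proof that $\mathrm{Im}\,\r_U$ is defined over $C$. The paper introduces $G_R$, the automorphisms of $R$ over $\mathbf{k}$, invokes Theorem~\ref{theo1}(2) to see that $(R,\phi)$ is a weak Picard--Vessiot extension, and then cites \cite[Proposition 2.2]{CHS} to get that $\{U^{-1}\f(U),\f\in G_R\}$ is an algebraic subgroup of $\mathrm{GL}_n(C)$ and \cite[Corollary 2.5]{CHS} to identify its base change to $C[i]$ with $\{U^{-1}\f(U),\f\in G_{R[i]}\}$. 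You instead exploit the reality of $R$ directly: the conjugation $\kappa$ of $R[i]$ over $R$ commutes with $\phi$ (since $\phi(i)=i$), normalizes $G_{R[i]}$, and, because $\kappa(U)=U$, acts on $\mathrm{Im}\,\r_U$ as entrywise $\mathrm{Gal}(C[i]/C)$-conjugation; Galois descent for the quadratic extension $C\subset C[i]$ (applied to the radical ideal, which is conjugation-stable because the point set is and $C[i]$ is algebraically closed) then gives the field of definition. Your route is more self-contained -- it avoids the weak Picard--Vessiot machinery of \cite{CHS} entirely -- and proves exactly what the proposition asserts; the paper's route buys the additional information that the $C$-points of the group are realized by actual difference automorphisms of $R$ itself (i.e., that the $C$-form is $\{U^{-1}\f(U),\f\in G_R\}$), which your descent argument does not directly provide but which the proposition does not require.
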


\begin{proof}
Let us prove the equality $\left\{ U^{-1}\f(U), \f \in G\right\}=\left\{ U^{-1}\f(U), \f \in G_{K[i]}\right\}$. Remind that $U\in \mathrm{GL}_{n}(R)$. Since an element of $G_{K[i]}$ induces an element of $G$, we obtain the inclusion $\left\{ U^{-1}\f(U), \f \in G_{K[i]}\right\}\subset \left\{ U^{-1}\f(U), \f \in G\right\}$. Let $\f\in G$. We may extend $\f$ as an element $\f_{K[i]}\in G_{K[i]}$ by putting $\f_{K[i]}(i)=i$ and for all $0\leq j\leq t-1$, $a,b\in \phi^{j}(e)R[i]$, $\f_{K[i]}(\frac{a}{b})=\frac{\f_{K[i]} (a)}{\f_{K[i]} (b)}$. Since $U\in \mathrm{GL}_{n}(R)$, we find that $U^{-1}\f(U)=U^{-1}\f_{K[i]}(U)$. Therefore, we obtain the other inclusion $\left\{ U^{-1}\f(U), \f \in G\right\}\subset\left\{ U^{-1}\f(U), \f \in G_{K[i]}\right\}$ and the equality $\left\{ U^{-1}\f(U), \f \in G\right\}=\left\{ U^{-1}\f(U), \f \in G_{K[i]}\right\}$.\\ \par 
With a similar reasoning to what is above, we obtain the equalities: 
$$\left\{ U^{-1}\f(U), \f \in G\right\}=\left\{ U^{-1}\f(U), \f \in G_{K[i]}\right\}=\left\{ U^{-1}\f(U), \f \in G_{R[i]}\right\}.$$ 
We define $G_{R}$, as the group of difference ring automorphism of $R$ letting $\mathbf{k}$ invariant.  Due to Theorem \ref{theo1}, (2), $(R,\phi)$ is a weak Picard-Vessiot extension for (\ref{eq2}) over $(\mathbf{k},\phi)$. Applying \cite[Proposition 2.2]{CHS}, we find that $\left\{ U^{-1}\f(U), \f \in G_{R}\right\}$ is a linear algebraic subgroup of~$\mathrm{GL}_{n} (C)$. 
  Then, we may use \cite[Corollary 2.5]{CHS}, to find that the latter group, viewed as a linear algebraic subgroup of~$\mathrm{GL}_{n} (C[i])$, equals to $\left\{ U^{-1}\f(U), \f \in G_{R[i]}\right\}$. We conclude the proof using the equality
$$\left\{ U^{-1}\f(U), \f \in G\right\}=\left\{ U^{-1}\f(U), \f \in G_{R[i]}\right\}.$$ 
\end{proof}

We finish this section by giving the Galois correspondence. See \cite[Theorem 1.29]{VdPS97} for the analogous statement in the case where $C$ is algebraically closed. 

\pagebreak[3]
\begin{theo}\label{theo2}
Let $(R,\phi)$ be a real Picard-Vessiot extension for (\ref{eq2}) over $(\mathbf{k},\phi)$ with total ring of fractions $K$, $\mathcal{F}$ be the set of difference rings $\mathbf{k}\subset F \subset K$, and such that every non zero divisor is a unit of $F$. Let $G$, be the real difference Galois group of (\ref{eq2}), $\mathcal{G}$ be the set of linear algebraic subgroups of $G$. 
\begin{enumerate}
\item For any $F\in \mathcal{F}$, the group $G(K/F)$ of elements of $G$ letting $F$ invariant belongs to $\mathcal{G}$.
\item For any $H\in \mathcal{G}$, the ring $K^{H}:=\{k\in K|\forall \f\in H, \f(k)=k\}$ belongs to $\mathcal{F}$.
\item Let $\a: \mathcal{F}\rightarrow \mathcal{G}$ and $\b: \mathcal{G}\rightarrow \mathcal{F}$ denote the maps $F\mapsto G(K/F)$ and $H\mapsto K^{H}$. Then, $\a$ and $\b$ are each other's inverses.
\end{enumerate}
\end{theo}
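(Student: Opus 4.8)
\textbf{Proof strategy for Theorem \ref{theo2}.} The plan is to reduce the real Galois correspondence to the classical one for the Picard-Vessiot extension $(R[i],\phi)$ over $(\mathbf{k}[i],\phi)$, which is available by \cite[Theorem 1.29]{VdPS97} since $\mathbf{k}[i]$ has algebraically closed field of constants $C[i]$ (Lemma \ref{lem2}). Write $K$ for the total ring of fractions of $R$, so that $K[i]$ is the total ring of fractions of $R[i]$. For a subring $F$ with $\mathbf{k}\subset F\subset K$ in which every non zero divisor is a unit, set $F[i]:=F\otimes_{\mathbf{k}}\mathbf{k}[i]\subset K[i]$; conversely a $\phi(i)=i$–stable intermediate ring of $K[i]$ closed under inversion of its non zero divisors is of this form precisely when it is not equal to its own extension by $i$, i.e. when it is ``real''. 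The first task is to set up this bijection between $\mathcal{F}$ and the set $\mathcal{F}^{\mathrm{cl}}$ of intermediate rings of $K[i]/\mathbf{k}[i]$ (closed under inversion of non zero divisors) that are defined over $\mathbf{k}$, i.e. stable under the complex conjugation $\sigma$ of $K[i]/K$. The same conjugation acts on $G_{K[i]}$, and by Proposition \ref{propo2} the map $\varphi\mapsto\varphi_{|R}$ identifies $G$ with the $\sigma$-fixed points of $G_{K[i]}$, viewed as a linear algebraic group over $C$ whose $C[i]$-points form $G_{K[i]}$; the linear algebraic subgroups of $G$ correspond exactly to the $\sigma$-stable (equivalently, $C$-defined) linear algebraic subgroups of $G_{K[i]}$.

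With these dictionaries in place, the proof proceeds in the following steps. First, for $F\in\mathcal{F}$, observe that $G(K/F)$ consists of those $\varphi\in G$ fixing $F$; extending each such $\varphi$ to $\varphi_{K[i]}\in G_{K[i]}$ by $\varphi_{K[i]}(i)=i$ (as in the proof of Proposition \ref{propo2}) gives an identification of $G(K/F)$ with the subgroup of $G_{K[i]}$ fixing $F[i]$, which by the classical Galois correspondence is a linear algebraic subgroup of $G_{K[i]}$; since it is manifestly $\sigma$-stable (fixing $F[i]$ is a $\sigma$-symmetric condition, as $\sigma(F[i])=F[i]$), it is defined over $C$ and hence lies in $\mathcal{G}$. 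This proves (1). Second, for $H\in\mathcal{G}$, let $H[i]$ denote the corresponding $\sigma$-stable algebraic subgroup of $G_{K[i]}$; the classical correspondence gives $K[i]^{H[i]}\in\mathcal{F}^{\mathrm{cl}}$, and because $H[i]$ is $\sigma$-stable its fixed ring is $\sigma$-stable, hence of the form $F[i]$ with $F:=K[i]^{H[i]}\cap K=K^{H}$. A small verification shows $K^H$ is itself closed under inversion of its non zero divisors (a non zero divisor of $K^H$ remains a non zero divisor of $K$, so is a unit in $K$, and its inverse, being fixed by $H$, lies in $K^H$). This proves (2). Third, the fact that $\alpha$ and $\beta$ are mutually inverse then follows by applying the two classical bijections $F[i]\mapsto G(K[i]/F[i])$ and $H[i]\mapsto K[i]^{H[i]}$ of \cite[Theorem 1.29]{VdPS97} and descending along the conjugation: $\beta\alpha(F)=K^{G(K/F)}$ has extension-by-$i$ equal to $K[i]^{G(K[i]/F[i])}=F[i]$, whence $\beta\alpha(F)=F$, and symmetrically $\alpha\beta(H)=H$ after taking $C$-points of $G(K[i]/K[i]^{H[i]})=H[i]$.

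The main obstacle I expect is the careful handling of the correspondence between $\mathcal{F}$ and $\sigma$-stable intermediate rings of $K[i]$: one must check that an intermediate ring $\widetilde F$ with $\mathbf{k}[i]\subset\widetilde F\subset K[i]$, stable under $\sigma$ and under inversion of non zero divisors, really is generated over its ``real part'' $\widetilde F\cap K$ by $i$ — equivalently that $\widetilde F=(\widetilde F\cap K)[i]$ — and that $\widetilde F\cap K$ is again closed under inversion of its non zero divisors; the subtlety is that $K$ is only a finite product of fields, not a field, so ``non zero divisor'' must be tracked componentwise through the idempotent decomposition $K[i]=\bigoplus_{j=0}^{t-1}\phi^j(e)K[i]$ of Proposition \ref{propo3}, and $\sigma$ may permute the blocks. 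A secondary point requiring care is the passage between linear algebraic subgroups of $G$ (a group over $C$) and $\sigma$-stable algebraic subgroups of $G_{K[i]}$ (its $C[i]$-points): here one uses that, $C$ being real closed so that $C[i]=\overline C$, a Zariski-closed subgroup of $G_{K[i]}$ is defined over $C$ if and only if it is stable under $\sigma$, together with \cite[Corollary 2.5]{CHS} as already invoked in Proposition \ref{propo2}. Once these descent statements are isolated as lemmas, the three assertions of the theorem are formal consequences of \cite[Theorem 1.29]{VdPS97}.
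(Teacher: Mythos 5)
Your proposal is correct and takes essentially the same route as the paper: both reduce Theorem \ref{theo2} to the classical Galois correspondence of \cite[Theorem 1.29]{VdPS97} for $(R[i],\phi)$ over $(\mathbf{k}[i],\phi)$, using the dictionary $F\mapsto F[i]$, $\widetilde{F}\mapsto \widetilde{F}\cap K$ together with the identification of $G$ with $G_{K[i]}$ from Proposition \ref{propo2} and the identity $K[i]^{H}=K^{H}[i]$. The only divergence is bookkeeping: you explicitly restrict to conjugation-stable intermediate rings of $K[i]$ and to $C$-defined subgroups and descend along $\sigma$, whereas the paper asserts outright that $F\mapsto F[i]$ is a bijection onto all of $\mathcal{F}_{[i]}$ and identifies $\mathcal{G}$ with all algebraic subgroups of $G_{K[i]}$; your version is if anything the more careful one, since $\widetilde{F}=(\widetilde{F}\cap K)[i]$ is precisely the $\sigma$-stability condition.
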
 

\pagebreak[3]
\begin{rem}
If we replace $G$ by $G_{R}$, see the proof of proposition \ref{propo2}, which is a more natural candidate for the definition of the real difference Galois group, we loose the Galois correspondence. Take for example $\phi Y(x):=Y(x+1)=\exp(1)Y(x)$, which has solution $\exp(x)$. A real Picard-Vessiot extension for $Y(x+1)=\exp(1)Y(x)$ over $(\R,\phi)$ is $(\R[\exp(x),\exp(-x)],\phi)$. Let $K:=\R(\exp(x))$ be the total ring of fractions. We have $G\simeq \C^{*}$ and $G_{R}\simeq \R^{*}$. Note that $G_{R}\subset \mathrm{GL}_{1}(\R)$, viewed as a linear algebraic subgroup of $\mathrm{GL}_{1}(\C)$, equals to $G$. On the other hand, we have no bijection with the linear algebraic subgroups of $G_{R}$, which are $\{1\},\Z/2\Z$, $\R^{*}$, and the difference subfields of $K$, which are $\R(\exp(kx))$, $k\in \N$.
\end{rem}

\begin{proof}[Proof of Theorem \ref{theo2}]
 Let  $\mathcal{F}_{[i]}$ be the set of difference rings $\mathbf{k}[i]\subset F \subset K[i]$, such that every non zero divisor is a unit of $F$. Let $\mathcal{G}_{[i]}$ be the set of linear algebraic subgroups of $G_{K[i]}$. Remind that the field of constants of $\mathbf{k}[i]$ is algebraically closed. In virtue of the Galois correspondence in difference Galois theory, see \cite[Theorem 1.29]{VdPS97}, we find that
\begin{trivlist}
\item (a) For any $F\in \mathcal{F}_{[i]}$, the group $G_{K[i]}(K[i]/F)$ of elements of $G_{K[i]}$ letting $F$ invariant belongs to $\mathcal{G}_{[i]}$.
\item (b) For any $H\in \mathcal{G}_{[i]}$, the ring $K[i]^{H}$ belongs to $\mathcal{F}_{[i]}$.
\item (c) Let $\a_{[i]}: \mathcal{F}_{[i]}\rightarrow \mathcal{G}_{[i]}$ and $\b_{[i]}: \mathcal{G}_{[i]}\rightarrow \mathcal{F}_{[i]}$ denote the maps $F_{[i]}\mapsto G_{K[i]}(K[i]/F)$ and $H\mapsto K[i]^{H}$. Then, $\a_{[i]}$ and $\b_{[i]}$ are each other's inverses.
\end{trivlist}
We use Proposition \ref{propo3} to find that we have a bijection $\g
: \mathcal{F}\rightarrow \mathcal{F}_{[i]}$ given by $\g(F):=F[i]$. The inverse is $\g^{-1}(F)=F\cap K$. Now, let us remark that since the fundamental solution has coefficients in $R$, for all $F\in \mathcal{F}$,  $G(K/F)=G_{K[i]}(K[i]/\g(F))$. If we combine this fact with (a) and Proposition \ref{propo2}, we find (1).\par 
Proposition \ref{propo2} tells us that we may identify the groups in $ \mathcal{G}$ with the corresponding groups in $\mathcal{G}_{[i]}$.  To prove the point (2), we remark that for all $H\in \mathcal{G}$, $K[i]^{H}=K^{H}[i]$. Combined with (b), this shows the point (2) since $K^{H}=\g^{-1}(K[i]^{H})\in \mathcal{F}$ .\par 
 The point (3) follows from (c) and the fact that for all $F\in\mathcal{F}$ (resp. $H\in \mathcal{G}$) we have $G(K/F)=G_{K[i]}(K[i]/\g(F))$ (resp. $K[i]^{H}=\g(K^{H})$).
\end{proof}

\fussy

\pagebreak[3]

\bibliographystyle{alpha}
\bibliography{biblio}
\end{document}